\documentclass[11pt]{amsart}
\usepackage{amsmath,amsfonts,amssymb,amscd,amsthm,amsbsy}
\textheight=8.5truein
\textwidth=6.6truein
\hoffset=-.5truein
\voffset=-.5truein
\numberwithin{equation}{section}
\newtheorem{thm}{Theorem}[section]

\newtheorem*{Thm}{Theorem}
\newtheorem{lem}[thm]{Lemma}

\newtheorem*{Cor}{Corollary}
\newtheorem{prop}[thm]{Proposition}
\newtheorem{defn}[thm]{Definition}


\def\bone{\mathop{{\bf1}\mkern-9.5mu \raise.4ex
	\hbox{$\scriptscriptstyle|$}\,}\nolimits}
\def\ds{\displaystyle}
\def\nat{{\mathbb N}}
\def\real{{\mathbb R}}

\def\I{\operatorname{I}}
\def\II{\operatorname{I\!I}}
\def\III{\operatorname{I\!I\!I}}

\def\IV{\operatorname{I\!V}}

\def\Id{\operatorname{Id}}

\def\sign{\operatorname{sign}}
\def\tr{\operatorname{tr}}
\def\liminf{\mathop{\rm lim\, inf\vphantom{p}}\nolimits}
\begin{document}
\title{Convergence of nonlocal threshold dynamics approximations to 
front propagation}
\author{Luis A. Caffarelli and Panagiotis E. Souganidis} 
\thanks{Department of Mathematics, University of Texas at Austin,
1 University Station C1200, Austin, TX 78712-0257 USA.\quad
email: caffarel@math.utexas.edu; 
souganid@math.utexas.edu\ \ }
\thanks{Both authors partially supported by the National Science Foundation.}
\begin{abstract}
In this note we prove that appropriately scaled threshold dynamics-type 
algorithms corresponding to the 
fractional Laplacian of order $\alpha \in (0,2)$ converge to moving fronts. 
When $\alpha \geqq 1$ the resulting interface moves by weighted 
mean curvature, while for $\alpha <1$ the normal velocity is nonlocal 
of ``fractional-type.''
The results easily extend to general nonlocal anisotropic 
threshold dynamics schemes. 
\end{abstract}

\maketitle      
\markboth{Caffarelli and Souganidis}
{Convergence of nonlocal threshold dynamics approximations to 
front propagation}

\abovedisplayskip=9pt plus 3pt minus 1pt
\belowdisplayskip=9pt plus 3pt minus 1pt
\abovedisplayshortskip=6pt plus 3pt
\belowdisplayshortskip=6pt plus 3pt

\section*{Introduction} 

We study here the convergence of a class of threshold dynamics-type 
approximations to moving fronts.
Although the arguments extend easily to general anisotropic kernels 
to keep the presentation simple, here we concentrate on a 
particular isotropic case, namely, fractional Laplacian of order 
$\alpha \in (0,2)$.
The resulting interfaces move either 
by weighted mean curvature, if $\alpha\in[1,2)$, 
or by a nonlocal fractional-type normal velocity, if $\alpha\in(0,1)$.

Threshold dynamics is a general term used to describe approximations to 
motion of boundaries of open sets in $\real^N$ by ``measuring'' interactions
with the environment. 
The general scheme we consider here is described as follows:

Let $\Omega_0$ be an open subset of $\real^N$ with boundary $\Gamma_0$.
The goal is to come up with an explicit approximation evolution scheme
with time step $h>0$, so that, as $nh\to t$, the ``approximate'' front 
$\Gamma_{nh}^h$, which is the boundary of an open set $\Omega_{nh}$ 
identified as the level set of a sign-function, 
 converges, in a suitable sense, to a moving front $\Gamma_t$, 
the boundary of an open set $\Omega_t$, 
and to identify the limiting velocity.
For each $n\in\nat$, let 
$$\Gamma_{nh}^h = \partial \{ x\in \real^N : u_h (\cdot, nh) =1\}\ ,$$
where 
\begin{equation}\label{eq0.0}
u_h (\cdot,0) = \bone_{\Omega_0}  - 
\bone_{\bar\Omega_0^c} \quad \text{in}\quad \real^N\ , 
\end{equation}
and, for $n\geqq 1$, 
\begin{equation}\label{eq0.01}
u_h (\cdot,(n+1)h) = \text{sign }(J_h * u_h (\cdot,nh))\ .
\end{equation}

Here 
$\text{sign}(t) = 1$ if $t>0$ and $-1$ otherwise, 
$\bone_A$ denotes the characteristic function of $A\subset \real^N$, and,  
for $\alpha \in(0,2)$,
\begin{equation}\label{eq0.2}
J_h (x) = p_\alpha (x, \sigma_\alpha (h)) \ ,
\end{equation}
where $p_\alpha$   
is the fundamental solution of the fractional Laplacian pde
\begin{equation}\label{eq0.3}
W_t - L^\alpha W=0 \quad\text{in}\quad \real^N\times (0,\infty)\ ,
\end{equation}
with 
\begin{equation}\label{eq0.4}
L^\alpha W(x) = \int |y-x|^{-(N+\alpha)} (W(y) -W(x))
\,dy\ .
\end{equation}

Hence, at each time step, we solve the equation~\eqref{eq0.3} with initial 
datum
$$W(\cdot,0) = u_h (\cdot,nh) \ \text{ in }\ \real^N\ ,$$
and for time $\sigma_\alpha (h)$.
Then we define $u_h (\cdot,(n+1)h)$ by 
$$u_h(x,(n+1)h) = 
\begin{cases}
1&\text{if\ \ $W(x,\sigma_\alpha (h)) >0$},\\
-1&\text{otherwise.}
\end{cases}$$

The algorithm generates functions $u_h (\cdot,nh)$  and 
open sets $\Omega_{nh}^h$ defined by 
$$\Omega_{nh}^h = \{x\in\real^N :J_h * u_h (\cdot,(n-1)h)(x)>0\}\quad
\text{and}\quad 
u_h (\cdot,nh) = \bone_{\Omega_{nh}^h}
-\bone_{(\Omega_{nh}^h)^c} \ \text{ in }\ \real^N\ .$$

We prove that, when $h\to0$, the discrete evolution $\Gamma_0 \to \Gamma_{nh}^h
= \partial \Omega_{nh}^h$ converges, in a suitable sense, to the motion 
$\Gamma_0 \to \Gamma_t$ with nonlocal fractional 
normal velocity, if $\alpha \in(0,1)$,  
and normal velocity equal to a multiple of the mean curvature, if 
$\alpha \in[1,2)$. 

If the kernel $J$ is a Gaussian, it is a classical result that the 
algorithm generates movement by mean curvature --- see below for an 
extensive discussion and references.

To state the main result we recall that the geometric evolution of a front 
$\Gamma_t=\partial \Omega_t$ with normal velocity
$v(Dn,n,\Omega_t)$, starting at $\Gamma_0 = \partial\Omega_0$,  
is best described by ``the level set'' partial differential equation  
\begin{equation}\label{pde}		
\begin{cases} \ds 
u_t + F(D^2 u, Du, \{u(\cdot,t)\geqq u(x,t)\}, \{u(\cdot,t)\leqq u(x,t)\} 
=0\quad\text{in}\quad
\real^N\times (0,\infty)\ ,\\
\noalign{\vskip6pt}
u=g\quad\text{on}\quad \real^N\times \{0\}\ ,
\end{cases}
\end{equation}
with $g$ such that 
\begin{equation}\label{pde1} 
\Omega_0 = \{x\in \real^N :g (x) >0\}\quad\text{and}\quad  
\Gamma_0 = \{x\in \real^N :g (x) =0\}\ ,
\end{equation}
and 
\begin{equation*}
\begin{split}
&F(D^2 u, Du, \{u(\cdot,t)\geqq u(x,t)\},\{u(\cdot,t)\leqq u(x,t)\} ) \\
\noalign{\vskip6pt}
&\qquad = |Du| v( -D(\frac{Du}{|Du|}) , - \frac{Du}{|Du|}, 
\{u(\cdot,t)\geqq u(x,t)\}, \{u(\cdot,t) < u(x,t)\} )\ .
\end{split}
\end{equation*}


The basic fact of the level set approach 
is that the sets $\Omega_t = \{x\in\real^N :
u(x,t) >0\}$ and $\Gamma_t = \{x\in\real^N :u(x,t)=0\}$ are independent 
of the choice of the initial datum $g$ provided the latter is positive 
in $\Omega_0$ and zero in $\Gamma_0$. 


The weighted mean curvature motion corresponds to the level set pde 
\begin{equation}\label{motion-by-mean}
u_t - C_\alpha \tr (I-\widehat{Du}\otimes \widehat{Du}) D^2 u =0
\ \text{ in }\ \real^N\times (0,\infty)\ ,
\end{equation}
where, for $p\in\real^N\setminus \{0\}$, $\hat p = p/|p|$, while the equation 
corresponding to the nonlocal motion is 
\begin{equation}\label{nonlocal-motion}
u_t - C_\alpha |Du| \int (\bone^+ (u(x+y,t) - u(x,t)) 
- \bone^- (u(x+y,t) - u(x,t))) |y|^{-(N+\alpha)}\,dy =0\ , 
\end{equation}
where $\bone^+$ and $\bone^-$ denote respectively the characteristic 
functions of $[0,\infty)$ and $(-\infty,0)$ and,  
in both cases,  $C_\alpha$ is an explicit constant specified later in the paper.

Although the heuristic meaning of \eqref{motion-by-mean} is well known, 
some discussion about \eqref{nonlocal-motion} is in order 

It is implicit in \eqref{nonlocal-motion} that there are sufficient 
cancellations in the term 
$\bone^+ (u(x+y,t)- u(x,t)) - \bone^- (u(x+y,t)- u(x,t))$ 
to compensate for the lack of integrability of the kernel 
$y\mapsto |y|^{-N-\alpha}$ at the origin. 
Indeed 
if we write the integral in polar coordinates, 
\begin{equation*}
\int_0^\infty r^{-(1+\alpha)} \int_{S^1} \bone^+ (u(x+r\sigma) 
- \bone^- (u(x+r\sigma)\, d\sigma\ ,
\end{equation*}
we see that 
the spherical integral measures the ``deviation'' of $\partial\Omega$,
where $\Omega = \{y\in\real^N :u(y+x,t) \geqq u(x,t)\}$, to be 
perfectly balanced between positive and negative parts, a deviation that 
infinitesimally is given by the mean curvature of $\partial\Omega$. 
Observe also that if the surface 
$\{ x\in\real^N : u(y+x,t) = u(x,t)\}$ is smooth and $|Du|\ne0$ on it, 
then an integration by parts leads to 
$$u_t + \frac{C_\alpha}{\alpha} |Du| \int_{\{y\in\real^N : u(x+y,t)=u(x,t)\}}
 \frac{y}{|y|^{N+\alpha}}\cdot 
\frac{Du(x+y,t)}{|Du(x+y,t)|} \,d\Sigma (y)\ ,$$
where $d\Sigma$ is the $(N-1)$-surface measure. 

To state the main result of the paper we recall that,  
given a bounded sequence $(u_h(\cdot,nh))_{n\in\nat}$ of bounded 
functions, the ``half relaxed'' limits $u^*$ and $u_*$ 
are defined by 
\begin{equation}\label{half-relaxed} 
\begin{cases}
u^* (x,t) = \limsup\nolimits^* u_h (x,t) 
= \limsup\limits_{y\to x\,,\ nh\to t} \mkern-8mu u_h (y,nh)\ ,\\
\qquad \text{and}\\
\noalign{\vskip6pt}
u_*(x,t) = \liminf_* u_h (x,t) 
= \!\! \liminf\limits_{y\to x\, ,\ nh\to t} \mkern-12mu u_h (y,nh)\ .
\end{cases}
\end{equation}

It is immediate that $u_* \leqq u^*$ and, more importantly, 
if $\liminf_* u_h= \limsup^* u_h$, then, as $h\to0$,  
$u_h\to u$ locally uniformly.

The result is:

\begin{Thm}
Assume that $\Gamma_0 = \partial \Omega_0 = \partial (\real^N\setminus 
\overline{\Omega}_0)$ and consider the family $(u_h(\cdot,nh))_{n\in\nat}$
defined by \eqref{eq0.0} 
and \eqref{eq0.01} with 
$\sigma_\alpha (h) = h^{\alpha/2}$, if $\alpha \in (1,2)$, 
$h= \sigma_1^2 (h) |\ln\sigma_1(h)|$ if $a=1$, and 
$\sigma_\alpha (h) =h^{\frac{\alpha}{1+\alpha}}$, if $\alpha \in (0,1)$. 
Let $\Omega_t = \{x\in\real^N :u(x,t) >0\}$ and 
$\Gamma_t = \{ x\in\real^N :u(x,t) =0\}$, where, for some uniformly 
continuous $g$ such that $\Omega_0 =\{x\in \real^N :g(x)>0\}$ and 
$\Gamma_0 = \{ x\in \real^N :g(x)=0\}$, $u$ is the unique uniformly 
solution of \eqref{pde} with $F$ given by \eqref{motion-by-mean}, 
if $\alpha \geqq 1$, and \eqref{nonlocal-motion}, if $\alpha <1$.
Then 
$$\liminf\nolimits_* u_h =1\ \text{ in }\ \Omega_t\quad\text{and}\quad 
\limsup\nolimits^* u_h =-1\ \text{ in }\ (\Omega_t \cup \Gamma_t)^c\ .$$
\end{Thm}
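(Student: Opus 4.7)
The strategy is the half-relaxed limits method of Barles--Perthame--Souganidis. The scheme is monotone (since $p_\alpha > 0$ and the sign operator is nondecreasing) and trivially stable since $|u_h| \leqq 1$, so it suffices to prove that $u^\ast := \limsup\nolimits^\ast u_h$ is a viscosity subsolution, and $u_\ast := \liminf\nolimits_\ast u_h$ a supersolution, of the level-set pde \eqref{pde} with $F$ given by \eqref{motion-by-mean} if $\alpha \in [1,2)$ and by \eqref{nonlocal-motion} if $\alpha \in (0,1)$. Comparison for these equations (classical in the local case, Imbert--Slep\v{c}ev-type in the nonlocal one) then forces $u^\ast \leqq u \leqq u_\ast$, and the sign identifications on $\Omega_t$ and $(\Omega_t \cup \Gamma_t)^c$ claimed in the theorem follow from the facts that $u_h \in \{-1,1\}$ and that the unique viscosity solution $u$ of \eqref{pde} is strictly positive on $\Omega_t$ and strictly negative on $(\Omega_t \cup \Gamma_t)^c$.

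The consistency check goes as follows. Let $\phi \in C^2$ touch $u^\ast$ strictly from above at $(x_0, t_0)$; the only interesting value is $u^\ast(x_0, t_0) = 1$. The standard relaxed-limit argument produces $h_k \downarrow 0$ and $(x_k, n_k h_k) \to (x_0, t_0)$ with $u_{h_k}(x_k, n_k h_k) = 1$ and $(x_k, n_k h_k)$ a strict local maximum of $u_{h_k} - \phi$. Since $u_{h_k}$ only takes the values $\pm 1$, this forces $u_{h_k}(y, (n_k - 1) h_k) = -1$ wherever $\phi(y, (n_k - 1) h_k) < \phi(x_k, n_k h_k)$; combined with $u_{h_k}(x_k, n_k h_k) = \sign\bigl((J_{h_k} * u_{h_k}(\cdot, (n_k - 1)h_k))(x_k)\bigr) = +1$, this yields
$$\int p_\alpha\bigl(x_k - y, \sigma_\alpha(h_k)\bigr)\, \bigl(\bone^+ - \bone^-\bigr)\bigl(\phi(y, (n_k-1)h_k) - \phi(x_k, n_k h_k)\bigr)\, dy \;\geqq\; 0.$$
Dividing by $h_k$ and passing to the limit in $k$ is the consistency statement whose outcome must be the subsolution inequality for $\phi$ at $(x_0, t_0)$.

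To evaluate this limit one splits the integral into a near-field $|y - x_k| \leqq \delta$ and a far-field. In the near-field, one Taylor-expands $\phi$: the odd part cancels by evenness of $p_\alpha$, while the quadratic part, together with the time increment $\phi(\cdot, n_k h_k) - \phi(\cdot, (n_k - 1)h_k) \sim h_k \phi_t$, provides the leading contribution. For $\alpha \in [1, 2)$, using the self-similarity $p_\alpha(x, \sigma) = \sigma^{-N/\alpha} p_\alpha(\sigma^{-1/\alpha} x, 1)$ and the prescribed scalings, this contribution divided by $h_k$ converges to the weighted mean curvature operator of \eqref{motion-by-mean}, with $C_\alpha$ an explicit dimensional integral; the choice $h = \sigma_1^2 |\ln \sigma_1|$ at $\alpha = 1$ compensates for the borderline logarithmic divergence of the relevant moment. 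For $\alpha \in (0, 1)$, the kernel $|y|^{-(N+\alpha)}$ is already locally integrable against $\bone^+ - \bone^-$ applied to the difference of $\phi$, which has nonvanishing gradient at the touching point; rescaling by $\sigma_\alpha(h) = h^{\alpha/(1+\alpha)}$ the near-field reproduces directly the nonlocal operator of \eqref{nonlocal-motion}, without needing curvature for convergence.

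The main obstacle is the far-field. Unlike the Gaussian kernel of the classical Merriman--Bence--Osher scheme, $p_\alpha$ has algebraic tails $p_\alpha(x, \sigma) \leqq C\sigma/(|x|^{N+\alpha} + \sigma^{(N+\alpha)/\alpha})$, so its mass on $\{|y - x_k| \geqq \delta\}$ is of order $\sigma$ rather than exponentially small. One resolves this by strengthening the strict-touching property of $\phi$: replace $\phi$ by $\phi - \ep|x - x_0|^4 - \ep(t - t_0)^2$ outside a neighborhood of $(x_0, t_0)$ so that $\sign(\phi(\cdot, (n_k - 1)h_k) - \phi(x_k, n_k h_k)) \equiv -1$ there. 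The far-field then becomes a constant multiple of the kernel's mass on a half-space minus its complement, which, when divided by $h_k$ with the prescribed $\sigma_\alpha(h_k)$, is negligible by the chosen exponent balances. Together with the near-field computation this gives the subsolution inequality; the supersolution property for $u_\ast$ is entirely symmetric, closing the argument.
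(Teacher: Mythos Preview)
Your overall strategy---half-relaxed limits plus consistency of the scheme---is the same as the paper's, but two steps are broken as written.

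First, the comparison step. You cannot conclude $u^\ast\leqq u\leqq u_\ast$ directly: the initial data for $u^\ast$ and $u_\ast$ are the usc/lsc envelopes of $\bone_{\Omega_0}-\bone_{\bar\Omega_0^c}$, which are not comparable with the continuous $g$ (for instance $u_\ast(\cdot,0)=-1$ on $\Gamma_0$ while $g=0$ there). The paper instead compares $u^\ast$ and $u_\ast$ with $\sign^\ast(u)$ and $\sign_\ast(u)$, which are the maximal usc subsolution and minimal lsc supersolution with the discontinuous initial datum; this is a standard fix but has to be made.

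Second, and more seriously, your far-field treatment fails for $\alpha\in(0,1)$. For the nonlocal equation the viscosity subsolution inequality \eqref{usc3} reads
\[
\phi_t \leqq C_\alpha|D\phi|\Big[\bar\I_{B_\delta}[\phi]+\bar\I_{B_\delta^c}[u^\ast]\Big],
\]
and the tail term $\bar\I_{B_\delta^c}[u^\ast]$ comes precisely from the far-field of the scheme. Since $p_\alpha(y,\sigma)\sim C_{N,\alpha}\,\sigma\,|y|^{-(N+\alpha)}$ for $|y|\gg\sigma^{1/\alpha}$, the far-field integral
\[
\int_{|y|>\delta}\big(\bone^+-\bone^-\big)\big(u_h^\ast(y+x_h,\cdot)-u_h^\ast(x_h,\cdot)\big)\,p_\alpha(y,\sigma)\,dy
\]
is of exact order $\sigma$ and, after division by $\sigma$, converges (using \eqref{eq0.7} and upper semicontinuity) to $C_{N,\alpha}\bar\I_{B_\delta^c}[u^\ast]$, not to zero. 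With $\sigma=h^{\alpha/(1+\alpha)}$ the time-derivative term, the near-field $\bar\I_{B_\delta}[\phi]$ term, and this far-field term are all of the \emph{same} order $\sigma$; none can be thrown away. Your proposed fix---subtracting $\ep|x-x_0|^4$ from $\phi$ to force the sign to $-1$ outside a neighborhood---does not work either: it destroys the coercivity $\phi\to+\infty$ needed for global maxima of $u_h^\ast-\phi$ to exist, and even if that were repaired, replacing the true far-field by the constant $-1$ yields an inequality strictly stronger than \eqref{usc3} and in general false.

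For $\alpha\in(1,2)$ the far-field (order $\sigma$) is genuinely lower order than the main contribution (order $\sigma^{1/\alpha}$), so it can be dropped---but by the crude estimate $|\bone^+-\bone^-|\leqq1$ together with $\int_{|y|>\delta}p_\alpha(y,\sigma)\,dy=O(\sigma\delta^{-\alpha})$, not by modifying $\phi$.
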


The Theorem asserts that the scheme characterizes the evolution of the 
front by assigning the values~1 inside the region $\{ u>0\}$ and $-1$  
outside the region $\{u<0\}$. 
Whether the regions where $u_h$ converges to 1 and $-1$ are exactly the 
regions inside and outside the front respectively depends on whether 
the front develops an interior or not. 
Interior are regions (patches) of positive measure where $u=0$.
Actually the answer is yes if and only if no interior develops.

We have:

\begin{Cor}		
If $\cup_{t>0}\Gamma_t \times \{t\} = \partial \{ (x,t) : u(x,t) >0\} 
= \partial \{ (x,t) : u(x,t)<0\}$, 
then the set $F^n = \cup_{n\in\nat} (\Gamma_{nh}^h \times \{nh\})$ converges, 
as $n\to\infty$, to $F =\cup_{t>0} (\Gamma_t \times \{t\})$ in the Hausdorff 
distance. 
\end{Cor}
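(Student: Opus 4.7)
The strategy is to verify the two one-sided Hausdorff inclusions (in spacetime) between $F^n$ and $F$. The conclusions of the Theorem translate to $u_\ast = u^\ast = 1$ on the open spacetime set $U^+ = \{u > 0\}$ and $u_\ast = u^\ast = -1$ on $U^- = \{u < 0\}$; a routine compactness argument upgrades this to the statement that on every compact $K \subset U^\pm$ one has $u_h \equiv \pm 1$ at all grid points in $K$ for $h$ small. The no-interior hypothesis $F = \partial U^+ = \partial U^-$ will enter only in one of the two inclusions.

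For the inclusion $F^n \subset F + B_\epsilon$, suppose $(y_k, n_k h_k) \to (y^\ast, t^\ast)$ along $h_k \to 0$ with $y_k \in \Gamma^{h_k}_{n_k h_k}$. Every neighborhood of $(y^\ast, t^\ast)$ then contains both $+1$ and $-1$ values of $u_{h_k}$ for large $k$, so $u^\ast(y^\ast, t^\ast) = 1$ and $u_\ast(y^\ast, t^\ast) = -1$. By the Theorem, $(y^\ast, t^\ast) \notin U^+ \cup U^-$, hence $(y^\ast, t^\ast) \in \overline{U^+} \cap \overline{U^-} \setminus (U^+ \cup U^-) = \partial U^+ \cap \partial U^-$, which equals $F$ by hypothesis. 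A contradiction argument on bounded spacetime regions then converts this into the required inclusion.

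For the reverse inclusion $F \subset F^n + B_\epsilon$, fix $(y,s) \in F$ and $\epsilon > 0$. Since $(y,s) \in \partial U^+ \cap \partial U^-$ and $U^\pm$ are open, I pick $(y^\pm, s^\pm) \in U^\pm$ within $\epsilon/4$ of $(y,s)$ and closed spacetime balls $\overline{B_\delta(y^\pm, s^\pm)} \subset U^\pm$; then $u_h \equiv \pm 1$ at every grid point in each ball for $h$ small. Choosing $n_h^\pm$ with $n_h^\pm h$ closest to $s^\pm$ yields $u_h(y^+, n_h^+ h) = 1$ and $u_h(y^-, n_h^- h) = -1$; assume $n_h^+ \leqq n_h^-$. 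I connect the two endpoints by a two-leg discrete path: the spatial segment $[y^+, y^-]$ at time $n_h^+ h$, followed by the temporal sequence $n \mapsto (y^-, nh)$ for $n = n_h^+,\ldots,n_h^-$. If any sign change occurs along the spatial leg, the crossing point lies in $\Gamma_{n_h^+ h}^h$, within $O(\epsilon)$ of $(y,s)$, and we are done; otherwise $u_h(y^-, n_h^+ h) = 1$ and the temporal sequence transitions at some index $n^\ast$ with $u_h(y^-, n^\ast h) = 1$ and $u_h(y^-, (n^\ast+1)h) = -1$.

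The principal obstacle is extracting a spatial boundary point from this temporal transition, which is where the locality of the kernel $J_h = p_\alpha(\cdot, \sigma_\alpha(h))$ enters. The scheme forces $J_h \ast u_h(\cdot, n^\ast h)(y^-) \leqq 0$. Pick $R_h$ so that the $J_h$-mass $m_h$ of $B_{R_h}(0)$ exceeds $1/2$: if $u_h(\cdot, n^\ast h) \equiv 1$ on $B_{R_h}(y^-)$, then $J_h \ast u_h(y^-) \geqq 2 m_h - 1 > 0$, contradicting the above. So $u_h(\cdot, n^\ast h)$ takes the value $-1$ somewhere in $B_{R_h}(y^-)$, hence $\Gamma_{n^\ast h}^h \cap B_{R_h}(y^-) \ne \emptyset$. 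The scalings $\sigma_\alpha(h)$ in the Theorem are designed precisely so that the spatial scale of the $\alpha$-stable density $p_\alpha(\cdot, \sigma_\alpha(h))$ vanishes; standard tail estimates then give $R_h = O(\sigma_\alpha(h)^{1/\alpha}) \to 0$ (i.e.\ $O(h^{1/2})$ for $\alpha \in (1,2)$, $O(h^{1/(1+\alpha)})$ for $\alpha \in (0,1)$, and the analogous rate for $\alpha = 1$). The produced boundary point lies within $O(\epsilon + R_h)$ of $(y,s)$ in spacetime, completing Hausdorff convergence once $h$ is small.
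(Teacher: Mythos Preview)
Your argument is correct and is essentially the one the paper defers to (the paper proves nothing here, only says ``The Corollary follows exactly as in Section~5 of \cite{BG}''); your two-inclusion strategy, the use of the Theorem to force limit points of $F^n$ into $\{u=0\}$, and the kernel-localization step to produce a nearby spatial boundary point from a temporal sign change are precisely the ingredients of the Barles--Georgelin proof, adapted from the Gaussian to the $\alpha$-stable kernel via the self-similarity \eqref{eq0.5}. The only cosmetic remark is that the no-interior hypothesis is in fact not needed for the first inclusion (any accumulation point of the discrete fronts lies in $\{u=0\}$ directly, since it is excluded from both $\{u>0\}$ and $\{u<0\}$ by the Theorem); it is used exactly where you use it, in the second inclusion, to guarantee that every point of $F$ is approached from both $U^+$ and $U^-$.
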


The strategy of the  
proof of the Theorem is similar to the one in  Barles and Georgelin 
\cite{BG}, which is based on the general scheme developed by Barles and 
Souganidis \cite{BS1} for convergence of monotone, stable and consistent
approximation to viscosity solutions.
Once the correct scaling $\sigma_\alpha (h)$ is identified, the main step is 
to prove the consistency of the scheme.     
The key difference between previous results and the one here is 
that all previous works considered kernels that were either 
exponentials or compactly supported, while in the case at hand they only 
have a prescribed power decay. 

Threshold dynamics schemes are used in probability and, in 
particular, percolation theory to find asymptotic shapes.
We refer to Gravner and Griffeath  \cite{GG} and the references 
therein for a discussion and results from the probabilistic point of view.

In the context of moving fronts, Bence, Merriman and Osher \cite{MBO} 
introduced a scheme to compute mean curvature motion by iterating the 
heat equation. 
Evans \cite{E} and Barles and Georgelin \cite{BG} provided proofs for the 
BMO-algorithm. 
Some extensions to other isotropic kernels were considered by 
Ishii \cite{Is}, while Ishii, Peres and Souganidis \cite{IPS} studied general 
anisotropic schemes with compactly supported kernels, and   
Slepcev \cite{S} proved the convergence of a class of nonlocal 
threshold dynamics. 
Recently, Da Lio, Forcadel, 
and Monneau \cite{DFM} studied the convergence, at large scales, of a 
nonlocal first-order equation to an anisotropic mean curvature motion. 
Although related to, the results of \cite{DFM} are different than ours.
Nonlocal operators and, in particular, fractional Laplacians of order 
$\alpha=1$ are often used to describe dislocation dynamics by line tension 
terms deriving from an energy associated to the dislocation line. 
We refer to Garroni and M\"uller \cite{GM1}, \cite{GM2} for a variational 
analogue of what we are doing here for $\alpha=1$.
For the case $\alpha <1/2$, the stationary solutions  of \eqref{nonlocal-motion}
must satisfy an integral ``zero mean curvature'' equation, that can be 
obtained as the Euler-Lagrange equation of a minimization process in the 
Hilbert space $H^{\alpha/2}$ of functions with ``$\alpha/2$ derivatives in 
$L^2$.''
A regularity theory of such surfaces, 
similar to the classical theory of 
``boundaries of sets with minimal perimeter,''
is being developed by Caffarelli, Roquejoffre and Savin \cite{CRS}. 
Finally, Imbert and Souganidis \cite{ImS} studied recently the onset of 
fronts at the asymptotic limit of fractional integral equations with 
reaction terms. 

Nonlocal phase transistion, models were proposed by Chen and Fife \cite{CF}, 
Giacomini and Lebowitz \cite{GL1}, \cite{GL2}, 
DeMasi, Orlandi, Presutti and Triolo \cite{DOPT}, 
and DeMasi, Gobron and Presutti \cite{DGP}, 
in the Landau-Ginzburg context of mean field theory for statistical mechanics.
All the above references assume, however, 
fast enough decay or compact support for the diffusion kernels 
to guarantee an infinitesimal curvature condition for the limit.
The connection between these nonlocal equations and the underlying 
stochastic Ising systems and moving fronts was established by 
Katsoulakis and Souganidis \cite{KS1,KS2} and 
Barles and Souganidis \cite{BS2}. 

The paper is organized as follows: 
In Section~1 we present some preliminaries.  
The proof of the Theorem begins in Section~2. 
The key  step in the proof, i.e., the consistency of the scheme, 
is presented in Section~3.

\section{Preliminaries}

We recall some basic facts from the Crandall-Lions \cite{CL}  
theory of viscosity solutions that we will be using in the paper. 
We begin with  the definitions.
Since the two equations \eqref{motion-by-mean} and 
\eqref{nonlocal-motion} are of different nature, i.e., local versus 
nonlocal, we give two separate definitions.

%

\begin{defn}\label{defn:usc}
An upper semicontinuous (usc) (resp. lower semicontinuous 
(lsc)) function $u$ is a viscosity subsolution (resp. supersolution) of 
\eqref{motion-by-mean}
if and only if, for all $\phi\in C^2 (\real^N\times (0,\infty))$, 
if, for any maximum (resp.  minimum) point 
$(x,t)\in\real^N\times (0,+\infty)$ 
of $u-\phi$, 
\begin{equation}\label{eq:usc1}
\phi_t \leqq C_\alpha \tr ( I- \widehat{D\phi}\otimes \widehat{D\phi}) D^2\phi 
\quad\text{if}\quad |D\phi|\ne0
\quad\text{or}\quad 
\phi_t \leqq 0\quad\text{if}\quad |D\phi| =0
\ \text{ and }\ D^2\phi=0\ ,
\end{equation}
(resp.
\begin{equation}\label{eq:usc2}
\phi_t \geqq C_\alpha \tr ( I- \widehat{D\phi}\otimes \widehat{D\phi}) D^2\phi
\quad\text{if}\quad |D\phi|\ne0
\quad\text{or}\quad 
\phi_t \geqq 0\quad\text{if}\quad |D\phi| =0 )
\ \text{ and }\ D^2\phi=0\ .
\end{equation}
\end{defn}

To make precise statements for the nonlocal motion, it is necessary to 
introduce some additional notation. 
To this end, for $v:\real^N\times (0,\infty) \to \real$ and
$A\subset \real^N$, let 
\begin{equation}\label{nl1}
\left\{\begin{array}{l}
\ds \bar I [v](x,t) 
= \int \bone^+ (v(y +x,t) -v(x,t)) - \bone^- (v(y+x,t) - v(x,t)) 
|y|^{-(N+\alpha)}\,dy\ ,\\
\noalign{\vskip6pt}
\ds \underline I [v](x,t) 
= \int 
\bone_+ ( v(y+x,t) - v(x,t))  - \bone_- ( v(y+x,t) -v(x,t)) 
 |y|^{-(N+\alpha)}\, dy\ ,\\
\noalign{\vskip6pt}
\ds \bar \I_A [v] (x,t) 
= \int_A 
\bone^+ (v(y+x,t) - v(x,t)) - \bone^- (v(y+x,t) - v(x,t))
|y|^{-(N+\alpha)}\, dy\ ,\\
\noalign{\vskip6pt}
\ds \underline \I_A [v](x,t) 
= \int_A 
\bone_+ (v(y+x,t) - v(x,t)) - \bone_- (v(y+x,t) - v(x,t)) 
|y|^{-(N+\alpha)}\, dy\ ,\\
\end{array}\right.
\end{equation}
where $\bone_+$ and $\bone_-$ denote respectively the characteristic 
functions of $(0,\infty)$ and $(-\infty,0]$. 

We rewrite the level set pde \eqref{nonlocal-motion} 
obtained for $\alpha <1$ as 
\begin{equation}\label{lset2}
u_t - C_\alpha |Du| \bar I [u] = 0\ \text{ in }\ \real^N\times (0,\infty)\ .
\end{equation}

We have:

\begin{defn}
A locally bounded usc (resp. lsc) function $u$ is a viscosity subsolution 
(resp. supersolution) of \eqref{lset2} if and only if, for all $\phi \in 
C^2 (\real^N \times (0,\infty))$, if 
for any maximum (resp. minimum) point 
$(x,t) \in \real^N \times (0,\infty)$ 
of $u-\phi$,  and 
for any ball $B_\delta \subset \real^N$ centered at $(x,t)$,
\begin{equation}\label{usc3}
\phi_t \leqq C_\alpha |D\phi| \Big[ \bar \I_{B_\delta} [\phi] 
+ \bar \I_{\real^N\setminus B_\delta} [u]\Big]
\end{equation}
(resp. 
\begin{equation}\label{usc4}
\phi_t \geqq C_\alpha |D\phi| \Big[ \underline \I_{B_\delta} [\phi] 
+ \bar \I_{\real^N\setminus B_\delta} [u]\Big]\ )\ .
\end{equation}
\end{defn}

Few comments are in order here.
Firstly, the difference between \eqref{usc3} and \eqref{usc4} is not a typo.
It is actually necessary to guarantee the well posedness and, in particular, 
the stability of the solution --- see \cite{S} for a discussion of a 
similar problem.
Secondly, it turns out (see Barles and Imbert \cite{BI}) that 
Definition~1.2 is actually independent of $\delta$. 
Hence, we may assume in the proofs that $\delta$ is either fixed depending 
on $\phi$, or, even, that $\delta \to0$ in an appropriate way. 

It is well known that the initial value problem \eqref{motion-by-mean} has 
a unique uniformly continuous solution --- see, for example, 
Barles, Soner and Souganidis \cite{BSS} 
and Ishii and Souganidis \cite{IsS} for general results.
The well-posedness of uniformly continuous solutions of the initial value 
problem for \eqref{nonlocal-motion}, which
follows along the lines of the analogous 
result for integro-differential operators, 
has been studied recently by Imbert \cite{Im}. 

It turns out, however, (see \cite{BSS} for a general discussion) that 
very weak, e.g., discontinuous, viscosity solutions 
of \eqref{motion-by-mean} and \eqref{nonlocal-motion}  may not be unique. 
The uniqueness is very much related to the issue of the development of 
interior. 

We say that an evolving front 
$(\Omega_t,\Gamma_t,\real^N\setminus\bar\Omega_t)$ 
does not develop interior if, for all $t\geqq 0$,
\begin{equation}\label{evolving-front}
\bigcup_{t>0} (\Gamma_t \times \{t\} ) 
= \partial \{ (x,t) : u(x,t) >0\} 
= \partial \{ (x,t) : u(x,t) <0\}\ .
\end{equation}

There are several sufficient conditions on 
$(\Omega_0,\Gamma_0,\real^N\setminus \bar\Omega_0)$ 
that imply that there is no interior (see \cite{BSS}). 
A general necessary and sufficient condition, which is related to the 
uniqueness of solutions of \eqref{pde}, 
is given in the next proposition.
For its proof we refer to \cite{BS2}. 

\begin{prop}\label{prop:BS}
For an open subset $\Omega_0$ of $\real^N$, let 
$(\Omega_t,\Gamma_t,\real^N\setminus \bar\Omega_t)$  
be the level-set evolution  of 
$(\Omega_0,\Gamma_0, \real^N\setminus\bar\Omega_0)$  
with normal velocity $-F$.

\noindent {\rm (i)} 
The no-interior condition \eqref{evolving-front}  holds 
if and only if it holds for $t=0$ and the initial value problem 
\eqref{pde} with initial datum  
$u_0= \bone_{\Omega_0} - \bone_{\real^N\setminus \bar\Omega_0}$ has a 
unique discontinuous viscosity solution.

\noindent {\rm (ii)} 
If \eqref{evolving-front} fails, then every usc subsolution 
(resp. lsc supersolution) $w$ of \eqref{pde} with 
$w(\cdot,0) \leqq \bone_{\bar\Omega_0}     - \bone_{\bar\Omega_0^c}$ 
(resp.  $w(\cdot,0) \geqq \bone_{\Omega_0} - \bone_{\bar\Omega_0^c}$) 
satisfies, in $\real^N\times (0,+\infty)$,
\begin{equation*}
w\leqq \bone_{\Omega_t^+\cup \Gamma_t} - \bone_{\Omega_t^-} \qquad 
\text{(resp. $w \geqq\bone_{\Omega_t^+} - \bone_{\Gamma_t \cup\Omega_t^-}$) .}
\end{equation*}
\end{prop}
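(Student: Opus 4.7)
The plan is to reduce both parts to the comparison principle between \emph{continuous} viscosity sub- and supersolutions of~\eqref{pde}---available for~\eqref{motion-by-mean} from~\cite{BSS,IsS} and for~\eqref{nonlocal-motion} from~\cite{Im,BI}---combined with the geometric invariance of the level-set method: the evolved regions $\Omega_t^+=\{u(\cdot,t)>0\}$, $\Omega_t^-=\{u(\cdot,t)<0\}$ and $\Gamma_t=\{u(\cdot,t)=0\}$ depend only on the initial triple $(\Omega_0,\Gamma_0,\real^N\setminus\bar\Omega_0)$ and not on the particular uniformly continuous $g$ realising it. The strategy is to squeeze the characteristic initial datum between continuous approximants, apply the standard comparison principle, and pass to the limit via stability.

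I would first prove~(ii). For the subsolution bound, let $g_\varepsilon^+(x)=\max\{1-\varepsilon^{-1}\operatorname{dist}(x,\bar\Omega_0),-1\}$, which is uniformly continuous, satisfies $g_\varepsilon^+\geqq\bone_{\bar\Omega_0}-\bone_{\bar\Omega_0^c}$, and decreases monotonically as $\varepsilon\downarrow 0$. Let $u_\varepsilon^+$ denote the unique uniformly continuous solution of~\eqref{pde} with initial datum $g_\varepsilon^+$. For any usc subsolution $w$ with $w(\cdot,0)\leqq\bone_{\bar\Omega_0}-\bone_{\bar\Omega_0^c}$, the continuous-to-usc comparison principle yields $w\leqq u_\varepsilon^+$ for every $\varepsilon>0$. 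As $\varepsilon\downarrow 0$, the decreasing family $(u_\varepsilon^+)$ has a pointwise limit whose usc envelope is, by stability of viscosity solutions, a usc subsolution of~\eqref{pde}, and whose positivity set is the monotone limit of $\{u_\varepsilon^+(\cdot,t)>0\}$. A squeeze argument, sandwiching this evolution between that of $\Omega_0$ and that of the open neighbourhood $\{\operatorname{dist}(\cdot,\bar\Omega_0)<\varepsilon\}$, identifies the limit as $\Omega_t^+\cup\Gamma_t$. Hence $w\leqq\bone_{\Omega_t^+\cup\Gamma_t}-\bone_{\Omega_t^-}$, and the supersolution bound is obtained symmetrically by approximating the initial datum from below.

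Part~(i) then follows by applying~(ii) to any discontinuous viscosity solution $u$ of~\eqref{pde} with initial datum $u_0=\bone_{\Omega_0}-\bone_{\bar\Omega_0^c}$: its usc envelope $u^\ast$ is a usc subsolution and its lsc envelope $u_\ast$ an lsc supersolution, so that
\[
\bone_{\Omega_t^+}-\bone_{\Gamma_t\cup\Omega_t^-}\;\leqq\;u_\ast\;\leqq\;u^\ast\;\leqq\;\bone_{\Omega_t^+\cup\Gamma_t}-\bone_{\Omega_t^-}.
\]
A Perron-type argument shows the two extreme functions in this chain are themselves discontinuous viscosity solutions with the correct initial envelopes. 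If~\eqref{evolving-front} holds, the two extremes agree off $\bigcup_t\Gamma_t\times\{t\}$, which has empty spacetime interior, forcing every discontinuous solution to have these prescribed envelopes and giving uniqueness in the appropriate sense. Conversely, when~\eqref{evolving-front} fails, the extremes disagree on a spacetime set of positive measure and therefore furnish two distinct discontinuous solutions, so uniqueness fails.

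The principal obstacle is the identification of the limiting positivity set in the approximation step. Because the $g_\varepsilon^+$ do not share the zero level set of any fixed $g$ producing $\Gamma_0$, the level-set invariance is not directly applicable, and one must combine the monotone convergence of the continuous solutions with stability and a sandwich between the evolutions of $\Omega_0$ and of shrinking neighbourhoods of $\bar\Omega_0$. The nonlocal case~\eqref{nonlocal-motion} is additionally delicate because the operator $\bar\I_{\real^N\setminus B_\delta}[u]$ depends globally on $u$; the $\delta$-independence of Definition~1.2 recorded in~\cite{BI} and the stability theorems for nonlocal viscosity solutions are the key tools for the safe passage to the limit.
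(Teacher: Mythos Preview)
The paper does not prove this proposition at all: immediately before the statement it says ``For its proof we refer to \cite{BS2}.'' So there is no in-paper argument to compare your proposal against.

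That said, your outline is essentially the approach of \cite{BS2} (and of \cite{BSS} for the local case): approximate the discontinuous initial datum monotonically by uniformly continuous data, invoke the comparison principle for continuous solutions, and use stability plus the invariance of the level-set evolution to identify the limiting envelopes with $\bone_{\Omega_t^+\cup\Gamma_t}-\bone_{\Omega_t^-}$ and $\bone_{\Omega_t^+}-\bone_{\Gamma_t\cup\Omega_t^-}$. Your identification of the ``principal obstacle'' is accurate: the approximants $g_\varepsilon^+$ have zero level sets $\{\operatorname{dist}(\cdot,\bar\Omega_0)=\varepsilon\}$ that shrink to $\Gamma_0$ only from outside, so the level-set invariance does not apply verbatim. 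The clean way to close this, as in \cite{BS2}, is to observe that for any uniformly continuous $g$ realizing $(\Omega_0,\Gamma_0)$ one has $g\leqq g_\varepsilon^+$ for $\varepsilon$ small, hence $u\leqq u_\varepsilon^+$, so $\{u_\varepsilon^+>0\}\supseteq\Omega_t^+$; and since $\{g_\varepsilon^+>0\}$ is an open neighbourhood of $\bar\Omega_0$ shrinking to $\bar\Omega_0$, monotonicity of the level-set flow gives $\bigcap_\varepsilon\{u_\varepsilon^+\geqq 0\}\subseteq\Omega_t^+\cup\Gamma_t$. This pins down the limit without further work. One small remark on your statement of (i): the chain of inequalities you derive holds regardless of whether \eqref{evolving-front} holds, and the equivalence is then read off from whether the two extremes coincide; your sketch reflects this correctly.
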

\vskip18pt

%
As far as the kernels $p_\alpha$ are concerned we recall 
that, for $(x,t)\in\real^N \times (0,\infty)$, 
\begin{equation}\label{eq0.5}
p_\alpha (x,t) = t^{-N/\alpha} P_\alpha (xt^{-1/\alpha})\ ,
\end{equation}
where, for some $C_{N,\alpha} >0$ and all $x\in \real^N$, 
the kernel $P_\alpha (x) = p_\alpha (x,1)$ satisfies 
\begin{equation}\label{eq0.6}
0\leqq P_\alpha (x) \leqq C_{N,\alpha} (1+|x|^{N+\alpha})^{-1}  
\ \text{ and }\ 
|DP_\alpha (x)| \leqq C_{N,\alpha} |x|^{N-1+\alpha} 
(1+ |x|^{N+\alpha})^{-2}\ . 
\end{equation}

We will also use here that, both locally uniformly in $\real^N\setminus \{0\}$ 
and in $L^1(\real^N)$,  
\begin{equation}\label{eq0.7} 
\lim_{t\to0} t^{-1} p_\alpha (\cdot, t) = \tilde p_\alpha (\cdot)\ ,
\end{equation}
where, for some $C_{N,\alpha} >0$,  
\begin{equation}\label{eq0.8} 
\tilde p_\alpha (x) = C_{N,\alpha} |x|^{-(N+\alpha)}\ .
\end{equation} 

\section{The proof of the convergence}

The main step of the proof of the Theorem  is 

\begin{prop}\label{prop:main}
The functions $\limsup^* u_h$ and $\liminf_* u_h$ are, respectively, 
viscosity subsolutions and supersolutions of \eqref{pde} for the 
$F$ specified in the Theorem.
\end{prop}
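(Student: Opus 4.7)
The plan is to follow the general framework of Barles and Souganidis \cite{BS1} for convergence of monotone, stable, and consistent approximation schemes. Monotonicity of the scheme is immediate from the nonnegativity of $J_h$ and the monotonicity of the $\sign$ function; stability is built in since $|u_h|\leq 1$ pointwise. Consistency is the one nontrivial ingredient and is the subject of Section~3; what I sketch here is how it is fed into the abstract argument. I concentrate on the subsolution property for $\bar u := \limsup\nolimits^* u_h$; the supersolution property for $\underline u := \liminf\nolimits_* u_h$ follows by a parallel argument.

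Fix $\phi \in C^2(\real^N \times (0,\infty))$ and a strict local maximum $(x_0, t_0)$ of $\bar u - \phi$. Since $u_h \in \{-1, 1\}$, the relaxed limit $\bar u$ also takes values in $\{-1, 1\}$ and only the case $\bar u(x_0, t_0) = 1$ is nontrivial. By the definition of $\limsup\nolimits^*$ there is a sequence $(x_h, n_h h) \to (x_0, t_0)$ of maxima of $u_h - \phi$ with $u_h(x_h, n_h h) = 1$ and $c_h := 1 - \phi(x_h, n_h h) \to 0$. The maximum property gives $u_h(\cdot, n_h h) \leq \phi(\cdot, n_h h) + c_h$ on a fixed neighborhood of $x_h$, and, combined with $u_h \leq 1$ globally, one controls the tail of the convolution to obtain
\begin{equation*}
J_h * u_h(\cdot, n_h h)(x_h) \;\leq\; J_h * \bigl(\phi(\cdot, n_h h) + c_h\bigr)(x_h) + E_h,
\end{equation*}
where $E_h$ becomes negligible after rescaling by $h^{-1}$ thanks to \eqref{eq0.6}. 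The subsolution inequality then reduces to evaluating the limit of the right-hand side.

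The heart of the matter, to be carried out in Section~3, is the following consistency claim. After shifting $\phi$ so that $\phi(x_h, n_h h) + c_h = 0$, and using that locally $\{\phi(\cdot, n_h h) + c_h \geq 0\}$ is close to a half-space $H$ with inward normal parallel to $D\phi(x_0, t_0)$, one expands
\begin{equation*}
J_h * \sign\bigl(\phi(\cdot, n_h h) + c_h\bigr)(x_h).
\end{equation*}
The radial symmetry of $P_\alpha$ gives $\int_H J_h = \tfrac12$ and kills the half-space contribution. The next-order correction, coming from the second-order Taylor expansion of $\phi$ at $(x_0, t_0)$ and from the tail of the kernel, produces, after rescaling by the natural length $\sigma_\alpha(h)^{1/\alpha}$ and dividing by $h$, the weighted mean curvature $\tr\bigl((I - \widehat{D\phi}\otimes \widehat{D\phi}) D^2\phi\bigr)$ for $\alpha \in [1,2)$, and the nonlocal integrand $\bar I[\phi]$ for $\alpha \in (0,1)$. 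The critical case $\alpha = 1$ requires the logarithmic balancing $h = \sigma_1^2(h) |\ln \sigma_1(h)|$ to produce the correct scaling, while for $\alpha \in (0,1)$ the kernel's slow decay $|y|^{-(N+\alpha)}$ is precisely what supplies the nonlocal integrand via \eqref{eq0.7}--\eqref{eq0.8}.

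The principal obstacle is the polynomial decay of $J_h$ at infinity: in the earlier threshold-dynamics works \cite{BG}, \cite{Is}, \cite{IPS}, \cite{S} the kernels were exponentially decaying or compactly supported, so the tail of the convolution was automatic, whereas here contributions at scales well beyond $\sigma_\alpha(h)^{1/\alpha}$ are of the same order as the near-field and must be handled using the sharp bounds \eqref{eq0.6} on $P_\alpha$ and $DP_\alpha$ together with a truncation radius depending carefully on $h$ and on the modulus of the gradient of $\phi$. Finally, the degenerate case $D\phi(x_0,t_0) = 0$ and $D^2 \phi(x_0, t_0) = 0$ is treated separately: the Taylor remainder of $\phi$ makes the deviation of $\{\phi \geq \text{const}\}$ from any half-space superlinear, which forces $J_h * \sign(\phi(\cdot, n_h h) + c_h)(x_h)$ to be $o(h)$ and yields the required inequality $\phi_t \leq 0$.
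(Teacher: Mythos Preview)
Your framework---monotonicity and stability for free, consistency via the Barles--Souganidis machinery with the hard work deferred to Section~3---is exactly the paper's. But the reduction in your first paragraph does not work, and your second paragraph silently switches to a different (and correct) reduction without supplying the link.

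The scheme reads $u_h(\cdot,n_h h)=\sign\bigl(J_h*u_h(\cdot,(n_h-1)h)\bigr)$, so $u_h(x_h,n_hh)=1$ gives $J_h*u_h(\cdot,(n_h-1)h)(x_h)\geq 0$; the comparison with $\phi$ must be made at time $(n_h-1)h$, not $n_hh$. This is a slip and the space--time maximum still provides what you need. The substantive problem is the inequality $J_h*u_h\leq J_h*(\phi+c_h)+E_h$. Since $c_h=1-\phi(x_h,n_hh)$ is $o(1)$ but in no way $O(h)$, the resulting bound $0\leq J_h*\phi(x_h)+c_h+E_h$ is vacuous after dividing by $h$; and $J_h*\phi$ need not even be defined, because $J_h$ has only $|y|^{-(N+\alpha)}$ decay while $\phi$ is taken to grow at infinity to make the maximum global. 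The threshold step is nonlinear and cannot be handled by a linear comparison with the test function.

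The paper instead exploits that $u_h^*$ takes only the values $\pm1$: from the space--time maximum one has the level-set inclusion $\{u_h^*(\cdot,(n_h-1)h)=1\}\subseteq\{\phi(\cdot,(n_h-1)h)\geq\phi(x_h,n_hh)\}$, hence the pointwise bound $u_h^*\leq\sign^*\bigl(\phi(\cdot,(n_h-1)h)-\phi(x_h,n_hh)\bigr)$, and convolving this against the nonnegative kernel $J_h$ yields the basic inequality~\eqref{eq2.3} written in terms of the indicators $\bone^\pm$ of the super- and sub-level sets of $\phi$. Your second paragraph's expansion of $J_h*\sign(\phi+c_h)$ is, after your shift, exactly this object; what is missing is the justification of the passage from $u_h$ to $\sign^*(\phi-\phi(x_h,n_hh))$, which replaces (rather than follows from) your first paragraph. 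With that repair, your outline of the consistency analysis---half-space cancellation from the symmetry of $P_\alpha$, next-order correction producing either $\tr(I-\widehat{D\phi}\otimes\widehat{D\phi})D^2\phi$ or the nonlocal term $\bar I$, and a separate treatment of the degenerate gradient---matches the paper's Propositions~\ref{prop3.1}--\ref{prop:alpha-equals-one}.
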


We postpone the proof of Proposition~\ref{prop:main} and we proceed with the 

\begin{proof}[Proof of the Theorem] 
Let $u$ be as in the statement 
and denote by $\sign^*$ and $\sign_*$ the usc and the lsc 
envelopes respectively of the sign function in $\real$. 

The functions (see \cite{BSS} for the proof) 
$$\sign^* (u) = \begin{cases} 1&\text{if\quad $u\geqq 0$,}\\
\noalign{\vskip6pt}
-1&\text{if\quad $u<0$,}
\end{cases}
\qquad\text{and}\qquad
\sign_* (u) = \begin{cases} 1&\text{if\quad $u>0$,}\\
\noalign{\vskip6pt}
-1&\text{if\quad $u\leqq 0$,}
\end{cases}$$
are respectively the maximal usc subsolution and the minimal lsc 
supersolution of \eqref{pde} with initial datum 
$\bone_{\Omega_0} - \bone_{\bar\Omega_0^c}$ --- 
recall that $\Omega_0 = \{x:g(x)>0\}$.
Therefore any subsolution (resp. supersolution) 
$w$ of \eqref{pde} with the same initial datum  satisfies  
$$w\leqq \sign^* u\qquad \text{(resp. } w\geqq \sign_* u)
\quad \text{in }\ \real^N\times (0,\infty) \ .$$

It then follows from 
Proposition~\ref{prop:main} that	
\begin{equation}\label{eq:prop-main}
\limsup\nolimits^* u_h \leqq \sign^* u\quad\text{and}\quad 
\liminf\nolimits_* u_h \geqq \sign_* u\  \text{ in }\ 
\real^N\times (0,\infty)\ .
\end{equation}

Since $u_h$ takes only the values $\pm1$, \eqref{eq:prop-main} gives 
$$\liminf\nolimits_* u_h = 1\quad\text{in}\quad \Omega_t = \{u>0\}
\quad\text{ and }\quad 
\limsup\nolimits^* u_h = -1\quad \text{in}\quad \{u<0\}\ .$$

The proof of the first part of the Theorem is now complete.
\end{proof}

The Corollary follows exactly as in Section~5 of \cite{BG}.

We continue with the:  

\begin{proof}[Proof of Proposition~\ref{prop:main}] 
We only present the argument for $\bar u= \limsup^* u_h$. 
The claim for $\liminf_* u_h$ follows similarly.

Let $\phi$ be a smooth test function and assume that $(x_0,t_0) \in 
\real^N\times (0,\infty)$ is a strict global maximum point of $\bar u-\phi$.
To avoid any technical difficulties,  we 
assume that $\liminf_{(y,s)\to\infty} \phi (y,s) = +\infty$.  

If either $\bar u (x_0, t_0) =-1$  or 
$(x_0,t_0)$ belongs to the interior of the set $\{\bar u=1\}$,
the facts that $\bar u$ is  usc and takes only 
the values $\pm1$ yield that $\bar u=-1$ in a neighborhood of $(x_0,t_0)$.
Therefore,  
\begin{equation}\label{sub}
D\phi (x_0,t_0) =0\ ,\quad D^2 \phi (x_0,t_0) \geqq 0
\quad\text{ and }\quad \phi_t (x_0,t_0) =0\ .
\end{equation}

Next we assume that $(x_0,t_0) \in \partial \{\bar u=1\}$.
It is  standard  in the theory of viscosity solutions that,
since $\phi$ grows at infinity and $(x_0,t_0)$ is a strict maximum, 
there exists a subsequence $(x_h,n_hh)$ such that, as $h\to0$,  
\begin{equation}\label{max}
\begin{cases}
u_h^* (x_h,n_h h) - \phi (x_h,n_h h) = 
\max_{\real^N\times\nat}  (u_h^* -\phi) \ ,\\
\noalign{\vskip6pt}
u_h^* (x_h,n_hh)\to 1\quad\text{and}\quad 
(x_h,n_h h) \to ( x_0,t_0)\ ,
\end{cases} 
\end{equation}
where $u_h^*$ is the upper semicontinuous envelop of $u_h$.

Here we used the upper semicontinuity of $u_h^*$ 
and the fact that 
$$\limsup\nolimits^* u_h = \limsup\nolimits^* u_h^*\ .$$

Once again $u_h$ taking only the values $\pm 1$ and 
$u_h^* (x_h,n_hh)\to1$, as $h\to0$, imply that, for $h$ sufficiently small, 
$u_h^* (x_h,n_h h) =1$. 
Moreover, for all $x\in \real^N$ and $n\in\nat$,
\begin{equation}\label{eq2.2} 
u_h^* (x,nh) \leqq 1-\phi (x_h,nh) + \phi (x,nh)\ . 
\end{equation}

Indeed, if $u_h^* (x,nh) =-1$, the inequality is trivially true, while 
if $u_h^* (x,nh) =1$, then 
$$\phi (x,nh) - \phi (x_h,n_hh) \geqq 0\ ,$$
and, therefore, 
\begin{equation*}
u_h^* (x,nh) \leqq \sign^* (\phi (x,nh) - \phi (x_h,nh))\ .
\end{equation*}

Recall next that 
$$u_h (\cdot,n_h h) = \sign^* (S(h) u_h(\cdot,(n_h-1)) (\cdot))\quad
\text{in}\quad\real^N\ ,$$
where 
$$S(h) v= J_h * v\ .$$

Therefore, 	
$$u_h (\cdot ,n_h h) \leqq \sign^* (S(h) u_h^* (\cdot, (n_h -1)h)) 
\ \text{ in }\ \real^N\ ,$$
and, hence, 
$$u_h^* (\cdot ,n_h h) \leqq \sign^* (S(h) u_h^* (\cdot, (n_h -1)h)) 
\ \text{ in }\ \real^N\ .$$

Let $x=x_h$.
Since
$$1= u_h^* (x_h,n_h h) = \sign^* S(h) (u_h^* (\cdot,(n_h-1)h) (x_h)\ ,$$
we have 
$$0\leqq S(h)( u_h^* (\cdot, (n_h -1)h)(x_h) 
= S(h) \sign^* u_h^* (\cdot, (n_h -1)h)(x_h).$$

The definition of $S(h)$ and $u_h^*$ taking only  the values $\pm1$ yield 
\begin{equation}\label{eq2.3}
\begin{split}
0\leqq & \int 
(\bone^+ ( u_h^*(y+x_h, (n_h-1) h) - u_h^* (x_h,n_h h))\\ 
\noalign{\vskip6pt} 
&\qquad -\bone^- (u_h^*(y+x_h, (n_h-1) h) - u_h^* (x_h,n_h h))) 
p_\alpha (y,\sigma_\alpha (h))\,dy\ . 
\end{split}
\end{equation}


The proof will then be complete if we show that \eqref{eq2.3} implies, that,  
at $(x_0,t_0)$,  
if $D\phi  =0$, then $\phi_t \leqq 0$, or, if $|D\phi| \ne 0$, 
$\phi_t \leqq F(D^2 \phi,D\phi, \{\bar u (\cdot,t)\geqq \bar u(x_0,t_0)\})$,
with $F$ given by \eqref{motion-by-mean} if $\alpha \in [1,2)$ and 
\eqref{nonlocal-motion} if $\alpha \in (0,1)$. 

This is exactly the consistency of the scheme, which we investigate 
in the next section. 

\end{proof}
\vskip18pt

\section{The consistency}

Since the argument is technical,  
it is necessary to look at several different 
cases depending on the range of $\alpha$ and 
whether $D\phi $ vanishes or not. 
As discussed earlier we only check the subsolution property.

We begin with the case $\alpha \in (0,1)$. 
To this end let 
\begin{equation}\label{const1}
C_\alpha = \bigg( 2 \int_{\real^{N-1}} P_\alpha (0,y')\,dy'\bigg)^{-1}
C_{N,\alpha}\ ,
\end{equation}
where $C_{N,\alpha}$ is the constant in \eqref{eq0.8}. 

\begin{prop}\label{prop3.1}
Fix $\alpha \in(0,1)$, set $\sigma_\alpha (h) = h^{\alpha/1+\alpha}$ and assume 
that \eqref{eq2.3} holds. 
Then, for any ball $B_\delta \subset\real^N$ centered at $(x_0,t_0)$, 
\eqref{usc3} holds at $(x_0,t_0)$ with $C_\alpha$ given by \eqref{const1}.
\end{prop}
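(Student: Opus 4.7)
The plan is to extract \eqref{usc3} from \eqref{eq2.3} by splitting the integral at $|y|=\delta$ and using the self-similar structure \eqref{eq0.5} of $p_\alpha$. I work in the principal case $D\phi(x_0,t_0)\neq 0$; the degenerate case $D\phi=0$ is simpler.

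First I simplify \eqref{eq2.3}. Since $u_h^*(x_h,n_hh)=1$ and $u_h^*\in\{-1,1\}$, the integrand equals $u_h^*(y+x_h,(n_h-1)h)$, and by \eqref{eq2.2} applied at time $(n_h-1)h$ one has the pointwise bound $u_h^*(y+x_h,(n_h-1)h)\le \sign^*(\Delta\phi(y))$, where $\Delta\phi(y)=\phi(y+x_h,(n_h-1)h)-\phi(x_h,n_hh)$. Using $p_\alpha\ge 0$, I use this upper bound only on the near ball and keep $u_h^*$ outside:
\[0\le \int_{|y|\le\delta}\sign^*(\Delta\phi(y))\,p_\alpha(y,\sigma_\alpha(h))\,dy+\int_{|y|>\delta}u_h^*(y+x_h,(n_h-1)h)\,p_\alpha(y,\sigma_\alpha(h))\,dy.\]
I then divide by $\sigma_\alpha(h)$ and pass to $\limsup_{h\to 0}$.

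The far integral is treated via \eqref{eq0.5}--\eqref{eq0.7}, which give $\sigma_\alpha(h)^{-1}p_\alpha(\cdot,\sigma_\alpha(h))\to C_{N,\alpha}|\cdot|^{-N-\alpha}$ locally uniformly on $\{|y|>\delta\}$ together with a uniform integrable majorant there. A reverse Fatou argument, combined with the upper semicontinuity of $v\mapsto \bone^+(v-1)-\bone^-(v-1)$ at $v=1=\bar u(x_0,t_0)$ and $\limsup^* u_h\le\bar u$, yields the limit bound $C_{N,\alpha}\bar\I_{\real^N\setminus B_\delta}[\bar u](x_0,t_0)$. For the near piece I expand $\Delta\phi(y)=\tilde\Delta\phi(y)-h\phi_t(x_h,n_hh)+O(h|y|+h^2)$, with $\tilde\Delta\phi(y)=\phi(y+x_h,n_hh)-\phi(x_h,n_hh)$, and decompose $\sign^*(\Delta\phi)=\sign^*(\tilde\Delta\phi)+[\sign^*(\Delta\phi)-\sign^*(\tilde\Delta\phi)]$. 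Choosing coordinates with $\widehat{D\phi(x_0,t_0)}=e_N$, the zero set $\{\tilde\Delta\phi=0\}$ is a $C^2$ graph $y_N=g(y')$ with $g(0)=0$, $Dg(0)=0$, and \eqref{eq0.7} combined with the resulting cancellation of $\sign^*(\tilde\Delta\phi)$ near the origin gives
\[\sigma_\alpha(h)^{-1}\int_{|y|\le\delta}\sign^*(\tilde\Delta\phi)\,p_\alpha(y,\sigma_\alpha(h))\,dy\ \longrightarrow\ C_{N,\alpha}\,\bar\I_{B_\delta}[\phi](x_0,t_0).\]
The bracketed correction is supported in the layer $\{y:\tilde\Delta\phi(y)\text{ between }0\text{ and }h\phi_t\}$, of $e_N$-thickness $\approx h|\phi_t|/|D\phi|$, much thinner than the natural length scale $\sigma_\alpha(h)^{1/\alpha}=h^{1/(1+\alpha)}$; thus $p_\alpha\approx p_\alpha(y',0,\sigma_\alpha(h))$ on the layer. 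After rescaling $y'=\sigma_\alpha(h)^{1/\alpha}z'$ the layer contributes $-\frac{2h\phi_t}{|D\phi|}\sigma_\alpha(h)^{-1/\alpha}\int_{\real^{N-1}}P_\alpha(0,z')dz'+o(1)$, and dividing by $\sigma_\alpha(h)$ and using the identity $\sigma_\alpha(h)\cdot\sigma_\alpha(h)^{1/\alpha}=\sigma_\alpha(h)^{(1+\alpha)/\alpha}=h$ (forced precisely by the choice $\sigma_\alpha(h)=h^{\alpha/(1+\alpha)}$) gives $-\frac{2\phi_t}{|D\phi|}\int_{\real^{N-1}}P_\alpha(0,z')dz'$. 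Recombining the three limits and using \eqref{const1} yields \eqref{usc3}.

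The main obstacle is the spatial near-integral convergence: since $|y|^{-N-\alpha}$ is not integrable at $0$, the cancellation must be quantified. The region $\{\tilde\Delta\phi>0\}$ differs from the half-space $\{y\cdot D\phi>0\}$ by a set of $e_N$-width $O(|y'|^2)$, so the integrand $\sign^*(\tilde\Delta\phi)-\sign^*(y\cdot D\phi)$ is supported in a tubular set whose intersection with $\{|y'|\sim r\}$ has volume $O(r^{N+1})$; integrating against $|y|^{-N-\alpha}$ reduces to $\int_0^\delta r^{-\alpha}dr$, finite exactly when $\alpha<1$. A secondary check is that the Taylor errors $O(h|y|+h^2)$, after rescaling to $z$-variables and in view of $h=\sigma_\alpha(h)^{(1+\alpha)/\alpha}$, contribute $o(1)$ to both the spatial and layer integrals and so do not perturb the leading computation.
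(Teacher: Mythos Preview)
Your approach mirrors the paper's: split at $|y|=\delta$, pass the far piece to $\bar\I_{B_\delta^c}[\bar u]$ via \eqref{eq0.7} and upper semicontinuity, and on the near ball separate the purely spatial integral (controlled by the $\int_0^\delta r^{-\alpha}\,dr<\infty$ cancellation, which is exactly the paper's polar-coordinate estimate $D_h^\delta(r)\le Cr$) from the time-layer that produces $-2\phi_t|D\phi|^{-1}\int_{\real^{N-1}} P_\alpha(0,y')\,dy'$ through the scaling identity $\sigma^{(1+\alpha)/\alpha}=h$. The only tactical difference is that you evaluate the layer directly by freezing $p_\alpha$ at $y_N=0$ and rescaling in $y'$, whereas the paper packages this step into a differentiation identity for $\sigma\mapsto\int\bone^+(y_1+F(y,\sigma))P(y)\,dy$ (Lemma~\ref{lem3.8}); both arrive at the same constant.

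One correction, however: the degenerate case $D\phi(x_0,t_0)=0$ is \emph{not} simpler in the paper's treatment. It splits into three subcases according to whether, along subsequences $h\to0$, the ratio $\sigma/|D\phi(x_h,t_h)|$ tends to $0$, tends to $\infty$ (or $\beta_h=0$ identically), or tends to a finite positive limit. In the first subcase the main argument is rerun with a shrinking radius $\delta_h\sim\beta_h$; the other two are handled by contradiction after a different rescaling. Your sketch leaves all of this untouched, and the blanket claim that it is simpler is not accurate.
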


\begin{proof}
Throughout the proof, to make the notation simpler, we drop the explicit 
dependence of $p_\alpha$ and $\sigma_\alpha$ on $\alpha$, 
we write $\sigma$ instead of $\sigma (h)$,
$t_h = nh$ and $\phi_h (y,s) =\phi (y+x_h,s) - \phi (x_h,t_h)$.

In view  \eqref{max}, for any $A\subset \real^N$, we have 
\begin{equation}\label{eq2.5}
\left\{\begin{array}{l}
\{ y\in \real^N :u_h^* (y+x_h,t_h-h) \geqq u_h^* (x_h,t_h)\} \cap A 
\subseteq \{y\in \real^N : \phi_h (y,t_h-h) \geqq 0\} \cap A\ ,\\
\noalign{\vskip6pt}
\text{\qquad and}\\
\noalign{\vskip6pt}
\{ y\in \real^N :u_h^* (y+x_h,t_h-h) < u_h^* (x_h,t_h)\} \cap A 
\supseteq \{ y\in \real^N : \phi_h (y,t_h-h) <0\} \cap A\ .
\end{array}\right.
\end{equation}

Let $a= \phi_t (x_0,t_0)$. 
Since, as $h\to0$, $\phi_t(x_h,t_h)\to \phi_t (x_0,t_0)$, for any 
$\gamma>0$ and sufficiently small $h$, which may depend on $\delta$, we have 
\begin{equation}\label{small-h} 
\phi (\cdot + x_h,t_h-h) \leqq \phi (\cdot + x_h,t_h) - (a-\gamma) h
\ \text{ in }\ \tilde B_\delta \ ,
\end{equation}
and, hence, 
\begin{equation*} 
\phi_h (\cdot,t_h-h) \leqq \phi_h (\cdot,t_h) - (a-\gamma) h\ \text{ in }\ 
\tilde B_\delta\ ,
\end{equation*}
where now the ball $\tilde B_\delta$ is centered at the origin.

We consider next two different cases depending on whether 
$|D\phi (x_0,t_0)|$ vanishes or not.

If $|D\phi (x_0,t_0)| \ne0$, then choosing $\delta$ sufficiently we may 
assume that the level sets of $\phi$ in $B_\delta$ near $\phi (x_0,t_0)$ 
are graphs of smooth functions with uniformly bounded derivatives. 

If 
\begin{equation*}
\I_h = \int (\bone^+ (u_h^* (y+x_h,t_h-h) - u_h^* (x_h,t_h)) 
- \bone^- (u_h^* (y+x_h,t_h-h) - u_h^* (x_h,t_h)) 
p (y,\sigma)\,dy\ ,
\end{equation*}
in view of \eqref{eq2.5}, we have 
\begin{equation}\label{eq2.51}
\I_h\leqq \I_h^\delta + \II_h^\delta\ ,
\end{equation}
where 
\begin{equation*}
\I_h^\delta = \int_{\tilde B_\delta^c} 
(\bone^+ (u_h^* (y+x_h,t_h-h) - u_h^* (x_h,t_h)) 
- \bone^- (u_h^* (y+x_h,t_h-h) - u_h^* (x_h,t_h)) 
p (y,\sigma)\,dy
\end{equation*}
and 
\begin{equation*}
\II_h^\delta = \int_{\tilde B_\delta} (\bone^+ (\phi_h (y,t_h-h)) 
- \bone^- (\phi_h (y,t_h-h)) p(y,\sigma)\,dy\ .
\end{equation*}

Since, as $h\to0$, $u_h^* (x_h,t_h)\to u^* (x_0,t_0)$, it follows 
from \eqref{eq0.5}, \eqref{eq0.7} and the upper semicontinuity of $u_h^*$ 
that 
\begin{equation}\label{eq2.6}
\varlimsup_{h\to0} \sigma^{-1}  \I_h^\delta \leqq C_{N,\alpha}
\bar \I_{B_\delta^c} [u] 
(x_0,t_0)\ .
\end{equation}

%

Next we concentrate on the limiting behavior, as $h\to0$, 
of $\II_h^\delta$ for $\delta$ sufficiently small.

Using \eqref{small-h} we find that 
\begin{equation*}
\II_h^\delta \leqq \III_h^\delta + \IV_h^{\delta,+} + \IV_h^{\delta,-}
\end{equation*}
where 
\begin{equation*}
\III_h^\delta = \int_{\tilde B_\delta} (\bone^+ (\phi_h (y,t_h)) 
- \bone^- (\phi_h (y,t_h))) p (y,\sigma) \,dy \ ,
\end{equation*} 
and
\begin{equation*}
\IV_h^{\delta,\pm} = \pm \int_{\tilde B_\delta} 
(\bone^\pm (\phi_h (y,t_h) - (a-\gamma) h) 
- \bone^\pm (\phi_h (y,t_h) )) p (y,\sigma)\,dy\ .
\end{equation*}

We treat each one of these terms separately. 
As far as $\III_h^\delta$ is concerned, we observe that the Dominated 
Convergence Theorem and \eqref{eq0.7}  yield 
\begin{equation}\label{eq:DCT}	
\begin{split}
\varlimsup_{h\to0} \sigma^{-1} \III_h^\delta 
&\le C_{N,\alpha} \int_{B_\delta} 
 (\bone^+ (\phi (y+ x_0,t_0) -\phi (x_0,t_0))\\ 
&\qquad \qquad  - \bone^- (\phi (y+ x_0,t_0) -\phi (x_0,t_0))) 
|y|^{-(N+\alpha)} \,dy\\
\noalign{\vskip6pt}
& = C_{N,\alpha} \bar \I_{B_\delta}^\delta [\phi] (x_0,t_0)\ ,
\end{split}
\end{equation}
provided we can show that the integrand in $\III_h^\delta$ 
divided by $\sigma^{-1}$ is integrable. 

To this end, using radial coordinates and \eqref{eq0.6}, we find 
\begin{equation*}
|\sigma^{-1} \III_h^\delta| 
\leqq C_{N,\alpha} \int_0^\delta (\sigma^2 + r^2)^{-\frac{N+\alpha}2} 
D_h^\delta (r)\,dr
\end{equation*}
where 
\begin{equation*}
D_h^\delta (r) = \Big| \int_{\partial \tilde B(r)} 
(\bone^+ (\phi (y+x_h,t_h) -\phi (x_h,t_h)) 
- \bone^- (\phi (y+x_h,t_h) - \phi (x_h,t_h)))\,d\sigma\ .
\end{equation*}

In general we know that $D_h^\delta (r)$ is bounded for all $r$. 
However, it is a calculus exercise to check that the regularity of $\phi$ 
and the fact that $|D\phi| \ne0$ in $B_\delta$ --- recall that $\delta$ 
is sufficiently small and $|D\phi (x_0,t_0)| \ne0$, yield, for some 
constant $C>0$ depending on $\delta$, 
$$D_h^\delta (r) \leqq Cr\ .$$

Hence, since $\alpha \in (0,1)$, 
$$|\sigma^{-1} \III_h^\delta| \leqq \int_0^\delta r 
(\sigma^2 +r^2)^{-\frac{N+\alpha}2}\,dr <\infty\ .$$

We continue now with the analysis of $\pm \IV_h^{\delta,\pm}$.
We show that, for $\sigma = h^{\alpha/1+\alpha}$, 
\begin{equation}\label{eq2.81}
\varlimsup_{h\to0} \pm\sigma^{-1} \IV_h^{\delta,\pm} 
\leqq - (a-\gamma) |D\phi (x_0,t_0)|^{-1} 
(2\int_{\real^{N-1}} P(0,y')\,dy')\ . 
\end{equation}

Combining \eqref{small-h} and \eqref{eq2.81} yields, after letting 
$\gamma\to0$, \eqref{usc3} at least when $|D\phi (x_0,t_0)| \ne 0$.

We show \eqref{eq2.81} when $a>0$. 
The case $a\leqq 0$ follows similarly.
To this end, given that $|D\phi (x_0,t_0)| \ne0$, by taking $\delta$ 
even smaller if 
necessary, we may assume that $\phi_h (\cdot,t_h)$ has the form 
\begin{equation}\label{eq2.88}
\phi_h (y,t_h) = \beta_y y + (A_h y,y)\ ,
\end{equation}
with $A_h = 2 D^2 \phi (x_h,t_h)$ 
and $\beta_h =|D\phi_h (x_h,t_h)| \to |D\phi (x_0,t_0)|$,
as $h\to 0$.

We then have  
\begin{equation*}
\{ y\in\real^N : 0 \leqq \phi_h (\cdot,t_h) \} 
= \{ y\in \real^N : 0\leqq y_1 + (\tilde A_hy ,y)\}
\end{equation*}
and 
\begin{equation*}
\{ y\in \real^N : (a-\gamma) h \leqq \phi_h (\cdot,t_h)\}
= \{ y\in \real^N : a_h h \leqq y_1 + (\tilde A_h y,y)\}\ ,
\end{equation*}
where 
\begin{equation*}
\tilde A_h =\beta_h^{-1} A_h \quad\text{ and }\quad 
a_h = \beta_h^{-1} (a-\gamma)\ .
\end{equation*}

Consider the integrals 
$$J_h^{\delta,\pm} = \pm \int_{B_\delta^c} 
\bone^\pm (\phi_h (y,t_h) - (a-\gamma)h) - \bone^\pm (\phi_h (y,t_h)) 
p(y,\sigma)\,dy\ .$$

In view of \eqref{eq0.5} and \eqref{eq0.6} we have, 
for some $C_{N,\alpha}^\delta >0$ 
$$0\leqq p (y,\sigma) \leqq \sigma C_{N,\alpha}^\delta |y|^{-(N+\alpha)} 
\ \text{ in }\ \tilde B_\delta\ .$$

Moreover, as $h\to0$ and almost everywhere in $y$,  
$$| \bone^\pm (\phi_h (y,t_h) - (a-\gamma) h) -
\bone^\pm (\phi_h (y,t_h)|\to 0\ .$$

Therefore 
$$\lim_{h\to0} \sigma^{-1} J_h^{\delta,\pm} = 0\ .$$

Adding and subtracting $J_h^{\delta,\pm}$ to \eqref{eq2.51} we see that to 
conclude the proof we need to show that 
\begin{equation}\label{eq2.82} 
\begin{split}
&\varlimsup_{h\to0}{}^{\pm}  \sigma^{-1} \int_{\real^N} 
(\bone^\pm (\phi_h (y,t_h) - (a-\gamma) h) - \bone^\pm (\phi_h(y,t_h)))
p(y,\sigma) dy \\
&\qquad \leqq -		
(a-\gamma) |D\phi (x_0,t_0)|^{-1} 
\int_{\real^{N-1}} P(0,y') dy'\ .
\end{split}
\end{equation}
%

Let 
\begin{equation*}
\begin{split}
\psi (\sigma) & = \int_{\real^N} (\bone^+ (\phi_h (y,t_h) - (a-\gamma) h) 
- \bone^+ (\phi_h (y,t_h)) p (y,\sigma)\,dy\\
& =\int_{\real^N} (\bone^+ (\phi_h (\sigma^{1/\alpha} y,t_h) - (a-\gamma) h) 
- \bone^+ (\phi_h (\sigma^{1/\alpha} y,t_h))) P(y)\,dy
\end{split}
\end{equation*}
and 
$$\Psi_h(y) = (\tilde A_h y,y)\ .$$

Since $\sigma = h\sigma^{-1/\alpha}$ and 
$$\phi_h (\sigma^{1/\alpha} y,t_h) = \sigma^{1/\alpha} \beta_h y_1 
+ \sigma^{2/\alpha} (A_h y,y)\ ,$$
we have
\begin{equation*}
\bone^+ (\phi_h (\sigma^{1/\alpha} y,t_h) - (a-\gamma)h) 
= \bone^+ (y_1 +\sigma^{1/\alpha} (\tilde A_h y,y) - a_h h \sigma^{-1/\alpha})
= \bone^+ (y_1 +\sigma^{1/\alpha} \Psi_h(y) - a_h \sigma) 
\end{equation*}
and 
\begin{equation*} 
\bone^+ (\phi_h (\sigma^{1/\alpha} y,t_h)) = \bone^+ (y_1 +\sigma^{1/\alpha} 
\Psi_h (y))\ .
\end{equation*}

It is therefore immediate that $\psi(0)=0$. 
Hence to prove \eqref{eq2.82} we need to find $\psi'(0)$.

In view of the above simplifications we rewrite $\psi$ as 
\begin{equation*}
\psi (\sigma) = \int_{\real^N} (\bone^+ (y_1 + \sigma^{1/\alpha} \Psi_h (y) 
- a_h \sigma) - \bone^+ (y_1 + \sigma^{1/\alpha}  \Psi_h (y))) P(y)\,dy\ .
\end{equation*}

We state below 
the main step of the proof of \eqref{eq2.82} as a separate lemma. 
To this end, let 
\newline $f:[0,\infty)\to\real$ be defined by 
\begin{equation}\label{f-def}
f(\sigma) = \int_{\real^N} \bone^+ (y_1 + F(y,\sigma)) P(y)\,dy \ .
\end{equation}

We have:  

\begin{lem}\label{lem3.8}
Let $f$ be given by \eqref{f-def} and assume that 
$F\in C^1 (\real^N \times [0,\infty))$ and 
\newline
$\partial_{y_1s}^2 F\in C (\real^N\times [0,\infty))$. 
Then 		
\begin{equation}\label{eq2.92}
\begin{split}
f(\sigma) - f(0) 
& = - \int_0^\sigma \int_{\real^N} 
\bone^+ (y_1 + F(y,\sigma)) \partial_{y_1}(\partial_\rho F(y,\rho) P(y))\,dy \\
\noalign{\vskip6pt}
&\qquad 
-\int_{\real^N}\bone^+(y_1 + F(y,\sigma)) \partial_{y_1} F(y,\sigma) P(y)\,dy\\
\noalign{\vskip6pt}
&\qquad + \int_0^\sigma \int \bone^+ (y_1 + F(y,\rho))\partial_\rho 
(\partial_{y_1} F(y,\rho) P (y))\,dy\ .
\end{split}
\end{equation}
\end{lem}

We continue the ongoing proof and return to the proof of the lemma later.
We use Lemma~\ref{lem3.8} with 
\begin{equation*}
F(y,\sigma) = \sigma^{1/\alpha} (\tilde A_h y,y) - a_h\sigma
\quad\text{and}\quad 
F(y,\sigma) = \sigma^{1/\alpha} (\tilde A_h y,y)\ ,
\end{equation*}
both of which satisfy the assumption of Lemma~\ref{lem3.8}.  
In either case we find 
\begin{equation*}
\partial_{y_1}  F(y,\sigma) = 2\sigma^{1/\alpha} (\tilde A_{h1,1} y_1 
+ \sum_{j=2}^N \tilde A_{h,j} y_j)\ .
\end{equation*}

Therefore, 
\begin{equation*}
\lim_{\sigma\to0} \sigma^{-1} [ \int_0^\sigma \int 
\bone^+ (y_1 + F(y,\rho))\partial_\rho (\partial_y , F(y,\rho)) P(y) 
- \int \bone^+ (y_1 + F(y,\sigma)) \partial_y , F(y,\sigma) P(y)\,dy ]
= 0\ .
\end{equation*}

Since $\alpha \in (0,1)$, it is hence immediate that 
\begin{equation*}
\psi' (0) = a_h \int_{\real^N} \bone^+(y_1) \partial_yP(y)\,dy 
= - a_h \int_{\real^N} {\boldsymbol\delta} (y_1) P (y) \,dy 
= - \frac{a-\gamma}{|D\phi (x_h,t_h)|} 
\int_{\real^{N-1}} P(y')\,dy\ ,
\end{equation*}
where $\boldsymbol\delta$ denotes the usual Dirac mass. 

Next assume that $|D \varphi (x_0,t_0)| =0$. 
Since, as $h\to0$, 
 $\beta_h = |D\phi (x_h,t_h)|\to0$, the previous argument does not work
and it is necessary to look at several cases. 
It is clear from the definition of the solution that we only have to show 
that 
$$a = \phi_t (x_0,t_0) \leqq 0\ .$$

First we assume that, along some sequence $h\to0$, $\beta_h\ne 0$ and 
$\sigma \beta_h^{-1}\to0$.  
In this case it is possible to repeat the previous argument. 
The main difference is that, instead of a fixed $\delta$, 
here we use 
$\delta_h = \beta_h C$ with $C= 2\|D^2\phi\|$. 

We look at each of the estimates/limits of the first part. 
We begin with $\I_h^\delta$.
Using that 
$$|\I_h^{\delta_h}| \leqq C\,\delta^{-\alpha}$$
we find 
$$\beta_h |\I_h^\delta| \leqq C\,\beta_h \delta_h^{-\alpha} 
= C\, \beta_h^{1-\alpha}\ .$$

For the limit involving $\III_h^\delta$ we observe that 
the integration takes place over a set $D_h$ such that
\begin{equation*}
\begin{split}
D_h &\subset \{y\in B_{\delta_h} : \beta_h |y_1|\leqq C(|y_1|^2 + |y'|^2)\}
\subset \{y\in B_{\delta_h} : \beta_h |y_1| \leqq C\, \delta_h |y_1| 
+ C|y'|^2\}\\
\noalign{\vskip6pt}
&\subseteqq \{ y\in B_{\delta_h} : |y_1| \leqq \beta_h(2C)^{-1} 
|y'|^2\}\ ,
\end{split}
\end{equation*}

Similarly, it follows that
$$\limsup_{h\to0} \sigma^{-1} \beta_h
|J_h^{\delta_h,\pm}| =0\ .$$

The only thing left to check now is that 
\begin{equation*}
\begin{split}
&\varlimsup_{h\to0} (\pm)\sigma^{-1} \beta_h
\int 
(\bone^\pm ( y_1 + \sigma^{1/\alpha} \beta_h^{-1} (A_hy,y) 
- (a-\gamma) \sigma \beta_h^{-1} ) 
- \bone^\pm ( y_1 +\sigma^{1/\alpha} \beta_h^{-1} (A_hy,y)) p(y,\sigma)\,dy\\
&\qquad \leqq - (a-\gamma) \int_{\real^{N-1}} P(0,y')\,dy'\ .
\end{split}
\end{equation*}

This, however, follows exactly as before. 
Notice that, since $\alpha <1$,  
$\sigma^{1/\alpha} \beta_h^{-1} 
= \sigma^{\frac1{\alpha}-1} \sigma\beta_h^{-1} \to0$, as $h\to0$. 

The next case is that, along a subsequence $h\to0$, either 
$\beta_h=0$ or $\sigma\beta_h^{-1}\to\infty$. 
Here we argue by contradiction and assume that $a>0$, and, hence, 
$a-\gamma >0$ for $\gamma$ sufficiently small.

Arguing as in the beginning of the proof of  the $|D\phi (x_0,t_0)|\ne0$ case, 
we find 
\begin{equation}\label{eq2.99}
\left\{\begin{array}{l}
\ds 0 \leqq C\sigma \delta^{-\alpha} + \mkern-8mu\int_{B_\delta}\mkern-10mu
(\bone^+ (\beta_h y_1\! +\! (A_hy,y)\! -\! (a\!-\!\gamma)) 
- \bone^- (\beta_h y_1\! +\! (A_h y,y)\! -\!(a\!-\!\gamma))) p(y,\sigma)\,dy\\
\noalign{\vskip6pt}
\ds = C\sigma\delta^{-\alpha} + \int_{\tilde B_{\delta\sigma^{-1/\alpha}}} 
 (\bone^+ ( \sigma^{-1} \beta_h y_1 + \sigma^{\frac1{\alpha}-1} 
(A_hy,y) - (a-\gamma))) \\
\noalign{\vskip6pt}
\ds - \bone^- ( \sigma^{-1} \beta_h  y_1 + \sigma^{\frac1{\alpha}-1} 
(A_h y,y) - (a-\gamma)) ) P(y)\, dy\ .
\end{array}\right.
\end{equation}

It is clear that, as $h\to0$  and a.e. in $y_1$, 
$$\bone^+ ( \sigma^{-1} \beta_h  y_1 + \sigma^{\frac1{\alpha}-1} 
(A_h y,y) - (a-\gamma))\to0
\ \ \text{ and }\ \ 
\bone^- ( \sigma^{-1} \beta_h  y_1 + \sigma^{\frac1{\alpha}-1} 
(A_h y,y) - (a-\gamma))\to1\ .$$

Hence dividing \eqref{eq2.99} by $\sigma$ and letting $h\to0$ we get 
a contradiction to $a>0$. 

The last case to consider is that, 
along a sequence $h\to0$, $\beta_h\ne0$ and $\sigma \beta_h^{-1} \to\ell$
with $\ell>0$.

Again we rewrite \eqref{eq2.99} as 
\begin{equation}\label{eq2.999}
\begin{split}
0\leqq C\sigma\delta^{-\alpha} + \int_{\real^N} 
&(\bone^+ ( y_1 + \sigma^{1/\alpha} \beta_h^{-1} (A_hy,y) - (a-\gamma)
\sigma \beta_h^{-1}) \\
\noalign{\vskip6pt}
&- \bone^- ( y_1 +\sigma^{1/\alpha} \beta_h^{-1} (A_h y,y) -(a-\gamma) 
\sigma \beta_h^{-1}) ) P(y)\,dy\ .
\end{split}
\end{equation}

Since $\sigma^{1/\alpha} \beta_h^{-1} = \sigma^{\frac1{\alpha}-1}
\sigma\beta_h^{-1} \to0$, as $h\to0$, letting $h\to0$ and $\gamma\to0$ 
in \eqref{eq2.999} gives 
\begin{equation*}
0\leqq \int_{\real^N} (\bone^+ ( y_1 - a\ell) 
- \bone^- ( y_1 - a \ell)) P(y)\,dy\ ,
\end{equation*}
which implies, in view of the symmetry of $P$, that we must have 
$a\leqq 0$.
\end{proof}

We continue now with the 

\begin{proof}[Proof of Lemma~\ref{lem3.8}]
To keep the ideas clear we present a formal proof using Dirac masses etc..
Everything can, of course, be made rigorous considering smooth approximations 
of $\bone^+$ and passing to the limit.
We leave it up to the reader to do so.

For $\rho>0$  we have 
\begin{equation*}
\begin{split} 
f'(\rho) & = \int_{\real^N} {\boldsymbol\delta}
 (y_1 + F(y,\rho)) \partial_\rho F(y,\rho)
P(y)\,dy\\
\noalign{\vskip6pt}
& = \int_{\real^N} \partial_{y_1} (\bone^+ (y_1 +F(y\rho)))
\partial_\rho F(y,\rho) P(y)\,dy
- \int_{\real^N} {\boldsymbol\delta}
 (y_1 +F(y,\rho)) \partial_\rho F(y,\rho) 
\partial_{y_1} F(y,\rho) P(y)\,dy\\ 
\noalign{\vskip6pt}
& = - \int_{\real^N} \bone^+ (y_1 +F(y,\rho)) \partial_{y_1} 
[\partial_\rho F(y,\rho) P(y) ]\,dy
- \int_{\real^N} (\partial_\rho \bone^+ (y_1 + F(y,\rho)))
\partial_{y_1} F(y,\rho) P(y)\,dy\ .
\end{split}
\end{equation*}

Therefore
\begin{equation*}
\begin{split}
f(\sigma) - f(0)
& = - \int_0^\sigma \int \bone^+ (y_1 +F(y,\rho)) \partial_{y_1} 
[\partial_y F(y,\rho) P(y)]\,dy\\
\noalign{\vskip6pt}
&\qquad - \int_{\real^N} \bone^+ (y_1 + F(y,\sigma))\partial_{y_1} 
F(y,\sigma) P(y)\,dy\\
\noalign{\vskip6pt}
&\qquad + \int_0^\sigma \int_{\real^N} \bone^+ (y_1+F(y,\rho)) 
\partial_\rho (\partial_{y_1} F(y,\rho) P(y))\,dy\ .
\end{split}
\end{equation*}

\end{proof}

We proceed now with the case $\alpha \in [1,2)$ where, with the 
appropriate choice of $\sigma_\alpha$, we obtain a front moving 
by a weighted mean curvature. 
We treat two separate cases namely $\alpha\in (1,2)$ and $\alpha=1$. 
The reason is that the former is more or less straightforward, while the 
latter requires a bit more delicate analysis due to the border-line
integrability properties of the kernel $p_\alpha$.

Let 
\begin{equation}\label{eq:alpha-in12c}
C_\alpha = \bigg[ 2\int_{\real^{N-1}}P_\alpha (0,y')\, dy'\bigg]^{-1}
\int y_2^2 P_\alpha (0,y')\,dy\ .
\end{equation}

We have: 

\begin{prop}\label{prop:alpha-in12} 
Assume $\alpha \in (1,2)$ and set $\sigma_\alpha (h) = h^{\alpha/2}$. 
If \eqref{eq2.3}  holds, then, at $(x_0,t_0)$, we have 
\eqref{eq:usc1} with $C_\alpha$ given by \eqref{eq:alpha-in12c} if 
$|D\phi| \ne0$ or $\phi_t \leqq 0$ if $D\phi=0$ and $D^2\phi=0$.
\end{prop}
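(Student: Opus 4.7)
The plan is to mirror the proof of Proposition~\ref{prop3.1} with the new scaling $\sigma_\alpha(h) = h^{\alpha/2}$, which gives $\sigma^{1/\alpha} = h^{1/2}$, so that the quadratic Taylor contribution of $\phi$ now enters at the same order as the translation $(a-\gamma)h$. Starting from \eqref{eq2.3}, I carry out the same decomposition
\begin{equation*}
0 \leq \I_h \leq \I_h^\delta + \II_h^\delta \leq \I_h^\delta + \III_h^\delta + \IV_h^{\delta,+} + \IV_h^{\delta,-}
\end{equation*}
via \eqref{eq2.5} and the bound \eqref{small-h} with $a=\phi_t(x_0,t_0)$. From \eqref{eq0.6} the tail still satisfies $|\I_h^\delta| \leq C\sigma\delta^{-\alpha}$, which is now $o(\sigma^{1/\alpha})$ precisely because $\alpha > 1$.

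For the case $|D\phi(x_0,t_0)| \neq 0$, I fix orthonormal coordinates with $y_1$ along $D\phi(x_h,t_h)$, write $\phi_h(y,t_h) = \beta_h y_1 + (A_h y,y) + O(|y|^3)$ as in \eqref{eq2.88}, and change variables $y=\sigma^{1/\alpha}z$. In $\III_h^\delta$ the rescaled argument of $\bone^+-\bone^-$ becomes $\beta_h z_1 + \sigma^{1/\alpha}(A_h z,z) + O(\sigma^{2/\alpha})$, and the symmetry identity $\int (\bone^+-\bone^-)(\beta_h z_1)P_\alpha(z)\,dz = 0$ kills the order-one term. Applying Lemma~\ref{lem3.8} to extract the next order,
\begin{equation*}
\III_h^\delta \sim \sigma^{1/\alpha}\frac{2}{\beta_h}\int_{z_1=0}(A_h z,z)\,P_\alpha(0,z')\,dz' = \sigma^{1/\alpha}\beta_h^{-1}\biggl(\int y_2^2 P_\alpha(0,y')\,dy'\biggr)\tr\bigl((I-\widehat{D\phi}\otimes\widehat{D\phi})D^2\phi\bigr),
\end{equation*}
where the rotational symmetry of $P_\alpha(0,\cdot)$ on $\real^{N-1}$ diagonalizes the tangential quadratic form. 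The identical machinery applied to $\IV_h^{\delta,\pm}$ yields the analogue of \eqref{eq2.81}, namely $\pm\sigma^{-1/\alpha}\IV_h^{\delta,\pm}\to -(a-\gamma)\beta_h^{-1}\int P_\alpha(0,y')\,dy'$. Multiplying the assembled inequality by $\beta_h\sigma^{-1/\alpha}$, letting $h\to 0$ then $\gamma\to 0$, and invoking the definition \eqref{eq:alpha-in12c} of $C_\alpha$ delivers \eqref{eq:usc1}.

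The degenerate case $D\phi(x_0,t_0)=0$ and $D^2\phi(x_0,t_0)=0$ is handled by a contradiction argument analogous to the three sub-cases at the end of Proposition~\ref{prop3.1}, split according to the behaviour of $\sigma^{1/\alpha}\beta_h^{-1}$. If $\sigma^{1/\alpha}\beta_h^{-1}\to 0$, the nondegenerate expansion still applies, and since $A_h\to 0$ it produces $(a-\gamma)\leq C_\alpha\cdot 0$, forcing $a\leq 0$. If $\sigma^{1/\alpha}\beta_h^{-1}\to \ell\in(0,\infty)$, then after rescaling the cubic remainder and the $A_h$ contribution both vanish and the inequality passes to
\begin{equation*}
0 \leq \int\bigl(\bone^+-\bone^-\bigr)\bigl(z_1 - (a-\gamma)\ell\bigr)\, P_\alpha(z)\,dz,
\end{equation*}
which by the radial symmetry of $P_\alpha$ forces $(a-\gamma)\ell\leq 0$, hence $a\leq 0$. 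Finally, if $\beta_h=0$ or $\sigma^{1/\alpha}\beta_h^{-1}\to\infty$, assuming $a>\gamma>0$, I choose $M=M(h)\to\infty$ such that $M\beta_h h^{-1/2}$, $M^2\|D^2\phi(x_h,t_h)\|$, and $M^3 h^{1/2}$ all vanish; the Taylor bound then gives $\phi_h(y,t_h-h)<0$ for $|y|\leq Mh^{1/2}$, so rescaling yields $\I_h\leq 1-2\int_{|z|\leq M} P_\alpha(z)\,dz + o(1) < 0$, contradicting $\I_h\geq 0$.

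The principal technical obstacle is replacing the formal Dirac-mass manipulations by the rigorous second-order identity of Lemma~\ref{lem3.8}; its application here requires the finiteness of $\int y_2^2 P_\alpha(0,y')\,dy'$ on the hyperplane $\{y_1=0\}$, which by the decay $P_\alpha\sim|y|^{-(N+\alpha)}$ holds precisely when $\alpha > 1$. This is the very reason both for the restriction in the present proposition and for the separate, more delicate, logarithmic treatment required when $\alpha=1$.
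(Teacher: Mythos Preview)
Your argument is essentially the paper's: both rescale by $\sigma^{1/\alpha}$, Taylor-expand $\phi$, and invoke Lemma~\ref{lem3.8} to extract the mean-curvature and time-derivative contributions at order $\sigma^{1/\alpha}$, followed by the same sub-case analysis in the degenerate situation. The only differences are organizational --- the paper merges your $\III_h^\delta$ and $\IV_h^{\delta,\pm}$ into a single perturbation $\psi(y)=((\tilde A+\gamma\Id)y,y)-a$ and applies Lemma~\ref{lem3.8} once, and it handles the cubic Taylor remainder (which your ``$O(\sigma^{2/\alpha})$'' understates, since it should carry a factor $|z|^3$ that is unbounded on the rescaled domain) by absorbing it into the $\gamma\Id$ term on an intermediate ball $B_R$ with $R=\sigma^{-\theta/\alpha}$, $1/\alpha<\theta<1$.
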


\begin{proof} 
As in Proposition~\ref{prop3.1}, to simplify the notation we write $\sigma$ 
for $\sigma_\alpha (h)$, $t_h$ for $nh$, 
and $\phi_h (y,t)$ for $\phi(x_h+y,t)-\phi (x_h,t_h)$. 

The starting point is again \eqref{eq2.3} which, using 
\eqref{max} and after a rescaling, implies   
\begin{equation}\label{eq3.12bis}
0\leqq \int
(\bone^+ ( \phi_h (\sigma^{1/\alpha} y,t_h -h)) - 
\bone^- ( \phi_h (\sigma^{1/\alpha} y,t_h -h))) P(y)\, dy\ . 
\end{equation}

Expanding $\phi_h$, using that 
$\phi_h (0,t_h) =0$, $\phi_h \in C^{2,1}$ and 
$\sigma  =h^{\alpha/2}$, we find 
$$\phi_h (\sigma^{1/\alpha} y,t_h-h) 
= \sigma^{1/\alpha} (p_h,y) + \sigma^{2/\alpha} 
( (A_hy,y) - a_h + O(\sigma^{1/\alpha} |y| +\sigma^{2/\alpha}) 
(|y|^2 +\sigma^{2/\alpha}))\ ,$$
where
$$p_h = D\phi_h (x_h,t_h) \ ,\quad 
A_h = \frac12 D^2 \phi_h (x_h,t_h)\quad \text{ and }\quad 
a_h = \phi_{h t} (x_h,t_h)\ .$$

After a rotation  and a change of variables, we may assume 
that $p_h = \beta_h e_1$ with $\beta_h = |p_h|$. 
We denote by $\tilde A_h$ the matrix we obtain from $A_h$ after the 
rotation. 
The integration in \eqref{eq3.12bis} is 
taking place over the sets $C_h$ and $C_h^c$, where 
\begin{equation*}
C_h 
= \{y\in \real^N : \sigma^{1/\alpha} \beta_h y_1 + \sigma^{2/\alpha} 
((\tilde A_h y, y) - a_h 
+ O ((\sigma^{1/\alpha} |y|^2 + \sigma^{2/\alpha})
(|y|^2 + \sigma^{2/\alpha})) \geqq 0\}\ .
\end{equation*}

We argue now as in the proof of Proposition~3.1 and \cite{BG}. 
The difference with the former is that 
$\alpha\in (1,2)$ gives integrability at the origin.
The difference with the latter is that here the kernel only has algebraic 
decay while in \cite{BG} it is an exponential.

Here we only present the argument if $|D\phi (x_0,t_0)|\ne0$. 
If $|D\phi (x_0,t_0)| =0$ and $D^2\phi (x_0,t_0)=0$, 
it is necessary to look again at different cases 
as in the proof of Proposition~3.1 and \cite{BG}. 
The argument is considerably simpler than the one presented in 
Proposition~3.1 since we do not need to consider special balls, etc..
We leave the details to the reader. 

Since, as $h\to0$, $\beta_h\to |D\phi (x_0,t_0)|\ne0$, 
the $\beta_h$'s are 
strictly positive for sufficiently small $h$.

Therefore 
$$C_h = \{y\in \real^N : 
y_1 + \sigma^{1/\alpha} \beta_h ((\tilde A_h,y,y) 
- a_h + O (\sigma^{1/\alpha} |y| +\sigma^{2/\alpha}) 
(|y|^2 + \sigma^{2/\alpha})) \geqq 0\}\ .$$

Finally using that, as $h\to0$, $\tilde A_h \to \tilde A$ and $a_h\to a$, 
where $\tilde A$ is the rotated matrix $A$ 
and $a= \phi_t (x_0,t_0)$, we find 
$$C_h = \{ y\in \real^N : y_1 + \sigma^{1/\alpha} \beta_h 
( (\tilde Ay,y) - a+ O (\sigma^{1/\alpha} |y| + \sigma^{2/\alpha}) 
(|y|^2 +\sigma^{2/\alpha}) 
+ o(1) (|y|^2 +1) \geqq 0\}\ .$$

After all the above reductions we are left with the inequality  
\begin{equation}\label{eq144}		
0 \leqq \int (\bone^+ (\Psi_h(y)) - \bone^- (\Psi_h(y))) P(y)\,dy\ ,`
\end{equation}
where, for $y\in \real^N$,  
$$\Psi_h (y) = y_1 + \sigma^{1/\alpha} (\beta_h)^{-1} 
[  (\tilde Ay,y) - a+ O (\sigma^{1/\alpha} |y| +\sigma^{2/\alpha})
(|y|^2 +\sigma^{2/\alpha}) 
+ o(1) (|y|^2 +1)]\ .$$

Let $R = \sigma^{-\frac{\theta}{\alpha}}$ for some $\theta > 0$. 
Then 
$$\int_{\real^N} \bone^\pm (\Psi_h (y)) P(y)\,dy 
= \int_{B_R} \bone^\pm (\Psi_h(y)) P(y)\,dy 
+ \int_{B_R^c} \bone^\pm (\Psi_h(y)) P(y)\,dy\ .
$$

Using \eqref{eq0.6}, we find, for some $c_{N,\alpha}>0$, that 
$$\int_{\real^N\setminus B_R} 
\bone^\pm (\Psi_h(y)) P(y)\,dy 
\leqq C_{N,\alpha} \int_{|y|\geqq R} 
(1+|y|^2)^{-\frac{N+\alpha}2}\,dy 
\leqq c_{N,\alpha} R^{-\alpha}\ .$$

Fix $\gamma>0$.  
For $h$ small  we have $\Psi_h \leqq \Psi^h$ in  $B_R$, where  
$$\Psi^h (z) = z_1 + \sigma^{1/\alpha} \beta_h^{-1} 
(\frac12 (\tilde A+\gamma \Id) - a\ .
$$     

Hence 
$$\int_{B_R} 
\bone_{\{ \Psi_h\geqq 0\}}(y) P (y)\, dy 
\leqq \int_{\real^N} \bone_{\{ \Psi^h\geqq 0\}}(y) P  (y)\, dy\ .$$

%
%
%
%
%
%

We summarize the above, using that
\begin{equation*}
\int \bone^+ (\Psi_h(y)) P(y)\,dy 
\leqq \int \bone^+ (\Psi^h(y)) P(y)\,dy\quad\text{and}\quad
\int \bone (\Psi^h (y)) P(y))\,dy 
\leqq \int\bone^- (\Psi_h (y)) P(y)\,dy\ ,
\end{equation*}
in the inequality 
\begin{equation}\label{eq145}
0\leqq \int (\bone^+ (y_1 + \rho \beta_h^{-1} \psi (y)) 
- \bone^- (y_1 + \rho\beta_h^{-1} \psi (y))) P(y)\,dy\ ,
\end{equation}
where $\rho = \sigma^{1/\alpha}$ and 
$$\psi (y) =  (\tilde A+\gamma \Id) -a\ .$$

Let 
$$f(\rho) = \int (\bone^+ (y_1 +\rho \beta_h^{-1} \psi (y)) 
- \bone^- (y_1 + \rho \beta_h^{-1} \psi (y))) P(y)\,dy\ .$$

The properties of $P$ yield that $f(0)=0$, therefore, as in the proof 
of Proposition~\ref{prop3.1}, we use Lemma~\ref{lem3.8} to find $f'(0)$.

It is a straightforward computation to see that it yields to the inequality 
$$a\leqq C_\alpha [\tr (\tilde A - \tilde A_{1,1}) - 2^{-1} (N+1)\gamma ]\ .$$

An elementary linear algebra calculation yields 
$$\tr (\tilde A - \tilde A_{1,1}) = \frac12 \tr 
(I - \widehat{D\phi (x_0,t_0)} \otimes \widehat{D\phi (x_0,t_0)} )
D^2\phi (x_0,t_0) \ ,$$
and, hence, after letting $\gamma\to0$, to the desired inequality.
\end{proof}

We continue with the case $\alpha=1$.
The argument is very similar to the one above. 
There is, however, a technical complication due to the logarithmic 
integrability of the kernel  $p_1$.
To deal with this difficulty, it is necessary to choose $\sigma_1$ 
differently. 

Let 
\begin{equation}\label{star-A}
C_1 = (2\int_{\real^{N-1}} P_1(0,y')dy')^{-1} 
\omega_{N-1}
\lim_{R\to\infty} (R^{N+1}  P_1 (0,R))\ ,
\end{equation}
where $\omega_{N-1}$ is the area of the unit sphere in $\real^{N-1}$ 
and, as always, $P_1$ is defined by \eqref{eq0.5}.

We have:

\begin{prop}\label{prop:alpha-equals-one}
Assume $\alpha=1$ and choose $\sigma$ so that 
$h = \sigma^2_1(h) |\ln \sigma_1(h)|$. 
If \eqref{eq2.3}  holds, then, at $(x_0,t_0)$,
\eqref{eq:usc1} holds with $C_1$ given by \eqref{star-A}  
if $|D\phi|\ne0$ or $\phi_t \leqq 0$, if $D\phi = 0$ and $D^2\phi=0$.  
%
\end{prop}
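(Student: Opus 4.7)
The plan is to mimic the proof of Proposition~\ref{prop:alpha-in12} with a single but delicate modification: for $\alpha = 1$ the second moment of $P_1$ on the hyperplane $\{y_1 = 0\}$ diverges logarithmically, and the calibration $h = \sigma_1^2|\ln\sigma_1|$ is precisely what converts this divergence into the finite constant $C_1$ of \eqref{star-A}.

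First assume $|D\phi(x_0, t_0)| \neq 0$. Writing $\sigma = \sigma_1(h)$, $\phi_h(y, s) = \phi(x_h+y, s) - \phi(x_h, t_h)$, $\beta_h = |D\phi(x_h, t_h)|$, $a_h = \phi_t(x_h, t_h)$ and $\tilde A$ for half of the rotated Hessian, Taylor expansion together with the identity $h/\sigma^2 = |\ln\sigma|$ converts \eqref{eq2.3}, after rescaling $y \to \sigma y$ and dividing the argument of $\sign$ by $\sigma\beta_h$, into
\[
0 \leq \int_{\real^N}\sign\!\Bigl(y_1 + \sigma\beta_h^{-1}\bigl[(\tilde A y, y) - |\ln\sigma|\, a_h\bigr] + \text{l.o.t.}\Bigr)P_1(y)\,dy.
\]
I would absorb the lower-order term by passing to the upper comparison $\tilde A \rightsquigarrow \tilde A + \gamma I$, $a_h \rightsquigarrow a_h - \gamma$, exactly as at the end of the proof of Proposition~\ref{prop:alpha-in12}. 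Integrating in $y_1$ first and using evenness of $P_1$ in $y_1$, the unperturbed contribution $\int\sign(y_1)P_1\,dy = 0$ drops out; formal linearization of $\sign$ then reduces the inequality, at leading order, to
\[
0 \leq 2\sigma\beta_h^{-1}\!\int_{\real^{N-1}}\!\bigl[(\tilde A' y', y') - |\ln\sigma|\, a_h\bigr] P_1(0, y')\,dy' + \text{(error)},
\]
where $\tilde A'$ denotes the restriction of $\tilde A$ to the orthogonal complement of the gradient direction.

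The key step is to handle the first integral, which diverges logarithmically. Truncating at $|y'| \leq R$, rotational invariance in $\real^{N-1}$ combined with the sharp asymptotic $\lim_{r\to\infty} r^{N+1}P_1(0, r) = C_{N,1}$ (read off from \eqref{eq0.5}--\eqref{eq0.8}) yields
\[
\int_{|y'| \leq R}(\tilde A' y', y')\, P_1(0, y')\,dy' = K_N\, \tr(\tilde A')\, \ln R + O(1) \quad\text{as } R \to \infty,
\]
with $K_N$ the explicit constant arising from the spherical average in $\real^{N-1}$, while the exterior $|y|>R$ contributes at most $O(R^{-1})$ by \eqref{eq0.6}. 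Choosing $R = \sigma^{-1}$ so that $\ln R = |\ln\sigma|$, dividing by $2\sigma|\ln\sigma|\beta_h^{-1}\!\int P_1(0, y')\,dy'$ and then letting $h \to 0$ and $\gamma \to 0$ would produce \eqref{eq:usc1} with $C_1$ as in \eqref{star-A}, via the identity $2\tr(\tilde A') = \tr((I - \widehat{D\phi}\otimes\widehat{D\phi})D^2\phi)$. The degenerate case $D\phi(x_0, t_0) = D^2\phi(x_0, t_0) = 0$ would be treated verbatim as in Proposition~\ref{prop3.1}, splitting into the three subcases according to whether $\sigma\beta_h^{-1}$ tends to $0$, $\infty$ or a finite positive limit.

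The main obstacle I anticipate is the intermediate annulus $\sigma^{-1/2} \lesssim |y'| \leq R$, where the quadratic perturbation $\sigma(\tilde A y, y)$ stops being small compared with $y_1$ and the naive linearization of $\sign$ breaks down. Combining the monotonicity $\sign(\Psi_h) \leq \sign(\Psi^h)$ of the upper comparison with explicit $y_1$-integration against the radially symmetric kernel $P_1$ should show that this annulus still contributes at the logarithmic rate $\sigma|\ln\sigma|$, and not at a worse fractional-power rate that would destroy the balance between $\phi_t$ and the weighted curvature --- a balance that is precisely what pins down the value of $C_1$.
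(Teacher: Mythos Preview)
Your proposal is correct and follows essentially the same route as the paper. The one technical difference worth noting concerns exactly the obstacle you flag at the end: rather than linearize $\sign$ directly and then worry about the annulus where $\sigma(\tilde Ay,y)$ is no longer small, the paper introduces the parameter $\rho=\sigma|\ln\sigma|$ (so $\sigma'(\rho)=(|\ln\sigma|-1)^{-1}$), writes the integral over $\{|y|\le\delta\sigma^{-1}\}$ as a function $f(\rho)$, and computes $f(\rho)-f(0)=\int_0^\rho f'$ via formal Dirac-mass differentiation. The point is that $\partial_\rho F=(\tilde A_hy,y)\,\sigma'-a$, so the small factor $\sigma'\sim|\ln\sigma|^{-1}$ is attached to the quadratic term \emph{before} one ever faces the logarithmically divergent second moment; this cancellation is what converts $\int_{|y|\le\delta\sigma^{-1}}(\tilde A_hy,y)P_1(0,y')\,dy'$ into the finite limit $\lim_{R\to\infty}(R^{N+1}P_1(0,R))$ appearing in \eqref{star-A}. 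The moving-domain boundary contribution is then shown to be $o(\rho)$ by a direct estimate. Your balance $R=\sigma^{-1}$, $\ln R=|\ln\sigma|$ is the same mechanism read in the other order, and the paper's device is simply a cleaner way to justify it.
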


\begin{proof}
We only discuss the case $|D\phi (x_0,t_0)| \ne0$. 
The argument when $|D\phi (x_0,t_0|=0$ and $D^2\phi (x_0,t_0)=0$, is similar to 
the one in Propositions~\ref{prop3.1} and \ref{prop:alpha-in12}, 
hence we omit the details.
The proof follows very closely the one presented for the case $\alpha\in(1,2)$,
hence, again we only sketch the main steps.
The main difference/difficulty is the logarithmic integrability of the kernel.
To circumvent this potential problem it is necessary to make a
 ``more involved'' scaling.

To this end for each $\delta>0$, arguing as before, we reach the inequality
\begin{equation}\label{eq3.17}
0 \leqq \int_{|y|\leqq \delta\sigma^{-1}} 
[\bone^+ (y_1 + F(y,\rho)) - \bone^- (y_1 + F(y,\rho))] P(y)\,dy 
+ C_{N,1} \delta^{-1}\sigma
\end{equation}
where, for $\rho = \sigma |\ln\sigma|$,
\begin{equation*}
F(y,\rho) = 
\sigma ((\tilde A_h + \gamma_h \Id)y,y) -\rho a_h
\end{equation*}
with $\tilde A_h = \beta_h^{-1} \tilde A$, $\gamma_h = \beta_h^{-1}\gamma$
and $a_h = \beta_h^{-1}\gamma$.

Define 
\begin{equation*}
f(\rho) = \int_{|y|\leqq \delta\sigma^{-1}} 
(\bone^+ (y_1 +F (y,\rho) - \bone^- (y_1 + F(y,\rho ))) P(y)\,dy\ .
\end{equation*}

Using the decay properties of $P$ and the dominated convergence 
theorem we find easily that $f(0)=0$.

As before we need to calculate $f'(0)$.
Arguing formally---the calculation can be justified rigorously using 
regularizations of $\bone^+$ and $\bone^-$, etc.,---we get 
\begin{equation*}
\begin{split}
f'(\rho) & = 2\int_{|y|\leqq \delta\sigma^{-1}} 
{\boldsymbol\delta} (y_1 + F(y,\rho)) \partial_\rho (F(y,\rho)) P(y)\,dy\\
\noalign{\vskip6pt}
&\qquad + \int_{|y| = \delta\sigma^{-1}} 
(\bone^+(y_1 + F(y,\rho)) - \bone^- (y_1 + F(y,\rho))) P(y)\, 
d\Sigma^\sigma  (\delta\sigma^{-1})'
\end{split}
\end{equation*}
where $d\Sigma^\sigma $ is the surface measure 
on $\partial \tilde B_{\delta\sigma^{-1}}$ and $\prime$ denotes 
differentiation with respect to $\rho$. 

Then
\begin{equation*}
f(\rho) - f(0) = \I_\rho + \II_\rho\ ,
\end{equation*}
with 
\begin{equation*}
\I_\rho = 2\int_0^\rho \int_{|y|\leqq \delta \sigma^{-1}(\lambda)} 
{\boldsymbol\delta} (y_1 + F(y,\lambda))\partial_\lambda (F(y,\lambda)) P(y)\, dy
\, d\lambda\ ,
\end{equation*}
and 
\begin{equation*}
\begin{split}
\II_\rho &= \int_0^\rho \int_{|y|=1} 
[(\bone^+ (\delta\sigma^{-1} (\lambda) y_1 
+ F(\delta\sigma^{-1}(\lambda) y,\lambda)) \\
\noalign{\vskip6pt}
&\qquad 
- \bone^- (\delta\sigma^{-1}(\lambda) y_1 + F(\delta\sigma^{-1}(\lambda)y,
\lambda))] P(\delta\sigma^{-1}(\lambda) y) (\delta\sigma^{-1})^{N-1} 
d\Sigma  
\,d\lambda
\end{split}
\end{equation*}
where $d\Sigma $ is the surface measure on $\partial B_1$ 
and $\lambda = \sigma (\lambda) |\ln \sigma(\lambda)|$.

The growth of $P$ and the fact that  $\sigma'(\lambda)= (|\ln\sigma|-1)^{-1}$
yield, for some $C'>0$, the estimate
\begin{equation*}
\rho^{-1} |\II_\rho| \leqq \rho^{-1} C' \int_0^\rho 
\frac{(\delta\sigma^{-1})^{N-1}}{(1+(\delta\sigma^{-1})^2)^{\frac{N+1}2}}
(-\frac{\delta\sigma'}{\sigma^2})\,d\lambda
\leqq \frac{C}{\delta} \rho^{-1}\int_0^\rho 
(|\ln \sigma| -1)^{-1}
\end{equation*}
and, hence, 
$$\lim_{\rho\to0} \rho^{-1} |\II_\rho| = 0\ .$$

Next we analyze $I_\rho$. 
We begin with the observation that 
$$\partial_\lambda F(y,\lambda) = (\tilde A_h y,y) \sigma' -a\ .$$

Hence
$$\I_\rho = \I^1_\rho + \I^2_\rho\ ,$$
with 
$$\I^1_\rho= -2a\int_0^\rho \int_{|y|\leqq \sigma \lambda^{-1}} 
{\boldsymbol\delta} (y_1 + F(y,\lambda)) P(y)\,dy\, d\lambda$$
and 
$$\I_\rho^2 = \int_0^\rho (|\ln\sigma|-1)^{-1} \int_{|y|\leqq\delta\sigma^{-1}}
{\boldsymbol\delta} (y_1 + F(y,\lambda)) (\tilde A_h y,y) P(y)\,dy\,d\lambda\ .$$

It is immediate that 
$$\lim_{\rho\to0} \rho^{-1} \I_\rho^1 = - 2a \int_{\real^{N-1}} P(0,y')dy'\ ,$$
while 
$$\rho^{-1} \I_\rho^2 = \int_0^\rho 
\ln (\delta \sigma^{-1}) (|\ln \sigma|-1)^{-1} (\ln \delta \sigma^{-1})^{-1} 
\int_{|y|\leqq \delta\sigma^{-1}}
{\boldsymbol\delta} (y_1 + F(y,\lambda)) (\tilde A_h y,y) P(y)\,dy\, 
d\lambda\ ,$$
and, hence, 
$$\lim_{\rho\to0} \rho^{-1} \I_\rho^2 
= \frac1{(N-1)} \lim_{R\to\infty} 
(\frac1{\ln R} \int_{|y|\leqq R} |y'|^2 P(0,y')\,dy) 
\tr (\tilde A- \tilde A_{1,1})\ .$$

Returning now to \eqref{eq3.17} we find 
\begin{equation}\label{eq3.18}
0\leqq \rho^{-1} \int_{|y|\leqq \delta \sigma^{-1}}
(\bone^+ (y_1 +F (y,\rho)) -\bone^- (y_1 + F(y,\rho))) P(y)\,dy 
+ C_{N,1} \frac1{\delta|\ln\sigma|}\ .
\end{equation}

Letting $\rho\to0$ yields 
\begin{equation}\label{eq3.19}
\begin{split}
2a  \int_{\real^{N-1}} P(0,y')\,dy 
&\leqq \frac1{(N-1)} \lim_{R\to\infty} \frac1{\ln R} 
\int_{|y|\leqq R} |y'|^2 P(0,y')\,dy\\
& = \frac1{N-1}\ \omega_{N-1} \lim_{R\to\infty}(R^{N+1} P_1(0,R))\ .
\end{split}
\end{equation}

We may  now conclude as in Proposition~3.2.
\end{proof}

\end{document}